\newcommand*{\addFileDependency}[1]{% argument=file name and extension
  \typeout{(#1)}% latexmk will find this if $recorder=0 (however, in that case, it will ignore #1 if it is a .aux or .pdf file etc and it exists! if it doesn't exist, it will appear in the list of dependents regardless)
  \@addtofilelist{#1}% if you want it to appear in \listfiles, not really necessary and latexmk doesn't use this
  \IfFileExists{#1}{}{\typeout{No file #1.}}% latexmk will find this message if #1 doesn't exist (yet)
}
\newcommand*{\myexternaldocument}[1]{%
    \externaldocument{#1}%
    \addFileDependency{#1.tex}%
    \addFileDependency{#1.aux}%
}
\DeclareMathAlphabet{\mathpzc}{OT1}{pzc}{m}{it} % load "Zapf Chancery" as a math alphabet
\DeclareMathOperator{\E}{E}
\newcommand{\mcM}{\mathcal{M}}
\newcommand{\mcP}{\mathcal{P}}
\newcommand{\mbR}{\mathbb{R}}
\newcommand{\mcS}{\mathcal{S}}
\newcommand{\bU}{{\bf U}}
\newcommand{\mcX}{\mathcal{X}}
\newcommand{\mbPk}{\mathbb{P}(k)}
\newcommand{\vech}{\text{vech}}
\newtheorem{assumption}{Assumption}
\newtheorem{definition}{Definition}
\newtheorem{theorem}{Theorem}
\theoremstyle{definition}
\newtheorem{example}{Example}
\newtheorem{proposition}{Proposition}
\newtheorem{lemma}{Lemma}
\title{Differential Privacy Over Riemannian Manifolds}
\author{%
Matthew Reimherr \\
Department of Statistics \\
Pennsylvania State University \\
University Park, PA  \\
\texttt{mreimherr@psu.edu} 
\And
Karthik Bharath  \\
School of Mathematical Sciences \\
University of Nottingham \\
Nottingham, UK \\
\texttt{Karthik.Bharath@nottingham.ac.uk}
\And
Carlos Soto \\
Department of Statistics \\
Pennsylvania State University \\
University Park, PA  \\
\texttt{cjs7363@psu.edu} 
}
\begin{document}

\maketitle

\begin{abstract}
In this work we consider the problem of releasing a differentially private statistical summary that resides on a Riemannian manifold.  We present an extension of the Laplace or K-norm mechanism that utilizes intrinsic distances and volumes on the manifold.  We also consider in detail the specific case where the summary is the Fr\'echet mean of data residing on a manifold.  We demonstrate that our mechanism is rate optimal and depends only on the dimension of the manifold, not on the dimension of any ambient space, while also showing how ignoring the manifold structure can decrease the utility of the sanitized summary.  We illustrate our framework in two examples of particular interest in statistics: the space of symmetric positive definite matrices, which is used for covariance matrices, and the sphere, which can be used as a space for modeling discrete distributions.  

\end{abstract}

\section{Introduction}
Over the last decade we have seen a tremendous push for the development and application of methods in data privacy.  This surge has been fueled by the production of large sophisticated datasets alongside increasingly complex data gathering technologies.  One theme that has emerged with the proliferation of highly structured and dynamic data is the importance of exploiting underlying structures in the data or models to maximize utility while controlling disclosure risks.  In this paper we consider the problem of achieving pure {\it Differential Privacy}, DP, when the statistical summary to be released takes values on a complete Riemannian manifold.   

Riemannian manifolds are used extensively in the analysis of data or parameters that are inherently nonlinear, meaning, either addition or scalar multiplication may cause the summary to leave the manifold, or such operations are not even well defined.  Classic examples of such objects include spatio-temporal processes, covariance matrices, projections, rotations, compositional data, densities, and shapes.  Traditional privacy approaches for handling such objects typically consist of utilizing an ambient or embedding space that is linear so that standard DP tools can be employed. 
For example, \citet{Karwa2016:Inference} considered the problem of releasing private degree sequences of a graph which required them to project back onto a particular convex hull as a post-processing step.    
  Such an approach is a natural starting point and reasonable so long as the space doesn't exhibit too much curvature.  However, there are several interrelated motivations for working with the manifolds directly.  First, if one employs an ambient space (known as taking an extrinsic approach), then calculations such as the sensitivity may depend on the dimension of the ambient space, which will in turn impact the utility of the private statistical summary.  For example, minimax rates in DP typically scale polynomially in the dimension \citep[e.g.][]{hardt2010geometry, bun2018fingerprinting, kamath2019privately}.  Second, the Whitney embedding theorem states that, in the worst case, to embed a manifold in Euclidean space requires a space that is twice the dimension of the manifold.  Third, if the manifold exhibits substantial curvature, then even small distances in the ambient space may result in very large distances on the manifold.  Lastly, the choice of the ambient space may be arbitrary and one would ideally prefer if this choice did not play a role in the resulting statistical analysis.

{\bf Related Literature:}  To the best of our knowledge, general manifolds have not been considered before in the DP literature.  The closest works come from the literature on private covariance matrix estimation and principal components \citep{blum2005practical,awan2019benefits,amin2019differentially,kamath2019privately,biswas2020coinpress,wang2020principal}.  While not always explicitly described in some works \citep{wang2013differential,wei2016analysis}, these objects lie in nonlinear manifolds, namely, the space of symmetric positive definite matrices (SPDM) and the space of projections matrices respectively, called the Stiefel manifold.   For example, in \cite{chaudhuri2013near} they consider the problem of generating a synthetic PCA projection by using the {matrix Bingham distribution}, a distribution over the Stiefel manifold \citep{khatri1977mises,hoff2009simulation}.  In contrast, producing private covariance matrix estimates usually involves adding noise in a way that preserves symmetry, but does not use any deeper underlying manifold structure.  A related problem comes from the literature on private manifold learning \citep{choromanska2016differentially, vepakomma2021differentially}, though this is entirely distinct from the present work, which assumes the underlying manifold is known, usually because of some physical constraints on the data or statistical summaries.

{\bf Contributions:}  In this paper we utilize tools from Differential Geometry that allow us to extend the Laplace mechanism for {\it $\epsilon$-Differential Privacy} to general Riemannian manifolds.  
Under this framework, we consider the problem of privately estimating the Fr\'echet mean of data lying on a $d$-dimensional manifold.  We are able to bound the global sensitivity of the mean and provide bounds on the magnitude of the privacy noise, as measured using the distance on the manifold, that match the optimal rates derived in Euclidean spaces.  % \citep{hardt2010geometry}.  
However, we demonstrate the influence of curvature of the space in understanding the sensitivity of the mean, and how the situation becomes especially challenging on positively curved spaces. 
We conclude by providing two specific numerical examples that elucidate this phenomenon: the first considers data coming from the space of positive definite matrices equipped with a geometry that results in negative curvature, while the second example considers data lying on the sphere, which has constant positive curvature.

\section{Notation and Background}
In this section we provide the basic notation, terminology, and mathematical concepts needed from differential geometry.  For a more detailed treatment of differential geometry and Shape Analysis there are many excellent classic texts, e.g., \citet{gallot1990riemannian,lang2002introduction,dryden2014shape,srivastava2016functional,lee2018introduction}, while an overview of DP can be found in \citet{dwork2014algorithmic}.  

Throughout the paper we let $\mcM$ denote a $d$-dimensional complete Riemannian manifold. For $m \in \mcM$, denote the corresponding tangent space as $T_m \mcM$.  %, and the tangent bundle as, $T \mcM:=\{(m,v):m\in \mcM, v \in T_m \mcM\}$. 
We assume $\mcM$ is equipped with a Riemannian metric $\{ \langle \cdot,  \cdot \rangle_m: m \in \mcM\}$, which is a collection of inner products over the tangent spaces $\{T_m \mcM : m \in \mcM\}$ that vary smoothly in $m$.  

Two quantities that will be used extensively in this work are that of the distance and volume induced from the Riemannian metric.  Consider two points $m_1,m_2 \in \mcM$ and a smooth path $\gamma:[0,1] \to \mcM$ such that $\gamma(0) = m_1$ and $\gamma(1) = m_2$.  The derivative $\dot \gamma(t)$ represents the velocity of $\gamma$ as it passes through the point $\gamma(t)$ and can thus be identified as an element of the tangent space $T_{\gamma(t)}\mcM$.  
We define the length of the curve as
\[
L(\gamma):=\int_0^1 \langle \dot \gamma(t), \dot \gamma(t)\rangle^{1/2}_{\gamma(t)} \ \text{d}t.
\]
The distance between $m_1$ and $m_2$ is the infimum over all possible paths connecting the two points
\[
\rho(m_1, m_2): = \inf_{\substack{\gamma: \gamma(0)=m_1 \\  \gamma(1) =m_2}} L(\gamma).
\]
If this distance is achieved by a particular path, $\gamma$, then we say that $\gamma$ is a {\it geodesic}.  Geodesics generalize the concept of a straight line to nonlinear spaces, and are thus a natural tool to consider when generalizing DP perturbation mechanisms.  The distance $\rho(\cdot,\cdot)$ defines a valid distance metric over $\mcM$; we say that $\mcM$ is complete if it is complete as a metric space.  By the {\it Hopf-Rinow theorem} \citep[][Theorem 6.19]{lee2018introduction}, this is equivalent to saying that to every pair of points there exists a minimizing geodesic, though if the points are far enough apart it need not be unique.  

In the next section we will use the {\it exponential map}, $\exp_m:T_m\mcM \to \mcM$, which is a means of moving between the manifold and tangent spaces.  
If there exists a unique geodesic, $\gamma$, between between points $\gamma(0)=m_1$ and $\gamma(1)=m_2$, then the exponential map is defined as the mapping $\exp_{m_1}(\dot \gamma(0)) = m_2.$  In other words, $\exp_{m}$ maps initial velocities to points on $\mcM$ through the use of geodesic curves.
If the manifold is complete, then the exponential map is locally diffeomorphic, meaning that for any $m \in \mcM$ there exists an open neighborhood $U_r(m)$ of $\mcM$ that is diffeomorphic to an open ball $B_r(\bm 0)$ centred at the origin of $T_m\mcM$. This ensures that the inverse $\exp^{-1}_m: U_r \to B_r(\bm 0)$ known as the inverse-exponential (also known as logarithm) map exists locally. The {\it injectivity radius of $m$} is the supremum of $r \mapsto U_r$ over all such radii $r$. The {\it injectivity radius of $\mcM$}, denoted by $\text{inj}\mcM$, is defined to be the infimum of the injectivity radii of \emph{all} points $m \in \mcM$.

The Riemannian metric can be used to define a notion of volume, called the Riemannian volume measure denoted as $\mu$, which acts analogously to Lesbesgue measure in Euclidean space. To define the measure, it helps to employ a chart, $(U,\phi)$, although the final definition will not depend on which chart we choose. Since $\phi: U \to \phi(U) \subset \mbR^d$ is a homeomorphism, at each $m \in U$ the inverse $\phi^{-1}$ induces a basis, $\partial x_1,\dots, \partial x_d$ on $T_m \mcM$ and a corresponding dual basis $d x^1,\dots, d x^d$ on $T_m \mcM^\ast$, the dual space of $T_m \mcM$.  Then the Riemannian volume form is defined as $\sqrt{|g|} dx^1 \wedge \dots \wedge dx^d$, where $g_{ij} = \langle \partial x_i, \partial x_j \rangle_m$ and $|\cdot|$ is the absolute value of the determinant, which can be shown to be invariant to the choice of chart.  This induces a volume over the set $U$ and upon employing a partition of unity, one can define $\mu$ over the entire manifold $\mcM$, equipped with the Borel $\sigma$-algebra.

\section{Manifold Perturbations and Differential Privacy}\label{s:dp_manifold}
Denote the dataset as $D = \{x_1,\dots, x_n\}$ with the data coming from points $x_i$ collected from an arbitrary set  $\mcX$.  We aim to release a statistical summary $f(D)$ which takes values on $\mcM$.  Defining differential privacy over a Riemannian manifold presents no major challenge since it is a well defined concept over any measurable space \citep{wasserman2010statistical,awan2019benefits}, which includes Riemannian manifolds equipped with the Borel $\sigma$-algebra.  Denote the (random) sanitized version of $f(D)$ as $\widetilde f(D)$. We can then define what it means for $\widetilde f(D)$ to satisfy $\epsilon$-DP.%that has mathematical privacy guarantees is released in its place.
\begin{definition}
A family of randomized summaries, $\{\widetilde f(D) \in \mcM: D \in \mcX^n\}$, is said to be $\epsilon$-differentially private with $\epsilon>0$, if for any adjacent database $D'$, denoted as  $D \sim D'$, differing in only one record we have
\[
P(\widetilde f(D) \in A) \leq e^\epsilon P(\widetilde f(D') \in A),
\]
for any measurable set $A$.
\end{definition}

In a similar fashion, we can extend the notion of sensitivity to Riemannian manifolds using the distance function.  However, this definition is by no means canonical and intimately connected to the type of noise one intends to use \citep{mirshani2019formal}.  
\begin{definition}
A summary $f$ is said to have a global sensitivity of $\Delta < \infty$, with respect to $\rho(\cdot,\cdot)$, if for any two adjacent databases $D$ and $D'$ we have 
\[
\rho(f(D), f(D')) \leq \Delta.
\]
\end{definition}

With a bounded sensitivity, it still isn't obvious how to produce a differentially private summary.  In particular, we no longer have linear perturbations or classic noise mechanisms such as Laplace or Gaussian.  However, the Riemannian structure allows us to use the volume measure as a base measure, similar to the Lebesgue measure on Euclidean spaces.  
We can then define a new distribution that can be viewed as a generalization of the Laplace distribution over manifolds \citep[e.g.][for SPDM]{hajri2016riemannian}.  It is worth noting that this distribution is not equivalent to the multivariate Laplace in Euclidean settings, instead it is an instantiation of the K-norm distribution \citep{hardt2010geometry}.

\begin{definition} \label{d:laplace}
A probability measure $P$ over $\mcM$ is called a Laplace distribution with footpoint $\eta \in \mcM$ and rate $\sigma > 0$ if for any measurable set $A$ we have
\[
P(A) = \int_{A} C_{\eta,\sigma}^{-1} e^{-  \rho(\eta, m) / \sigma} \ \normalfont{\text d} \mu(m),
\]
where $0 < C_{\eta,\sigma}< \infty $ is the normalizing constant and $\mu$ is the Riemannian volume measure over $\mcM$.
\end{definition}

Unlike with Euclidean distance, the normalizing constant might only be finite for certain values of $\sigma$. \textcolor{black}{The foot-point represents the center of the distribution but, in general, need not be equal to the mean of the distribution (which also might not exist).  For data privacy, one often has to restrict the data or parameter space anyway, in which case the Laplace distribution can be restricted so that the normalizing constant, mean, etc are all finite and well defined.  }
The advantage in using such a mechanism is that sensitivity can be readily transferred into differential privacy.  %\textcolor{blue}{Foot point vs stat vs the mean.}

\begin{theorem} \label{t:dp}
Let $f:\mcX^n \to \mcM$ be a summary with global sensitivity $\Delta$.  Then the Laplace mechanism with footpoint $f(D)$ and rate $\sigma = 2\Delta/\epsilon$ satisfies $\epsilon$-differential privacy. If the normalizing constant, $C_{\eta,\sigma}$ does not depend on the footpoint, $\eta$, then one can take $\sigma = \Delta/\epsilon$.
\end{theorem}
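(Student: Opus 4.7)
The plan is to reduce the $\epsilon$-DP inequality to a pointwise bound on the Laplace density ratio, using only the triangle inequality for $\rho$ and the definition of global sensitivity. Fix two adjacent databases $D \sim D'$ and write $\eta = f(D)$, $\eta' = f(D')$, so that $\rho(\eta,\eta') \leq \Delta$. For any measurable $A \subset \mcM$ I would start from
\[
P(\widetilde f(D) \in A) \;=\; \int_A C_{\eta,\sigma}^{-1}\, e^{-\rho(\eta,m)/\sigma}\, \text{d}\mu(m),
\]
and the analogous expression with $\eta'$, and try to bound the ratio of integrands by $e^\epsilon$ uniformly in $m$.

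The first key step is the pointwise density bound. The triangle inequality for the Riemannian distance gives $|\rho(\eta,m) - \rho(\eta',m)| \leq \rho(\eta,\eta') \leq \Delta$ for every $m$, so
\[
e^{-\rho(\eta,m)/\sigma} \;\leq\; e^{\Delta/\sigma}\, e^{-\rho(\eta',m)/\sigma}.
\]
The second key step handles the normalizing constants. Integrating the same bound against $\mu$ yields
\[
C_{\eta',\sigma} \;=\; \int_\mcM e^{-\rho(\eta',m)/\sigma}\, \text{d}\mu(m) \;\leq\; e^{\Delta/\sigma}\, C_{\eta,\sigma},
\]
so that $C_{\eta,\sigma}^{-1} \leq e^{\Delta/\sigma}\, C_{\eta',\sigma}^{-1}$. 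Combining the two bounds inside the integral over $A$ gives
\[
P(\widetilde f(D) \in A) \;\leq\; e^{2\Delta/\sigma}\, P(\widetilde f(D') \in A),
\]
and choosing $\sigma = 2\Delta/\epsilon$ makes the prefactor $e^\epsilon$, which is the required $\epsilon$-DP inequality. For the improved rate, if $C_{\eta,\sigma}$ does not depend on $\eta$, the normalizing constants cancel outright, only the pointwise density bound contributes a factor $e^{\Delta/\sigma}$, and $\sigma = \Delta/\epsilon$ suffices.

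I do not expect any real obstacle: the completeness of $\mcM$ guarantees that the Hopf–Rinow theorem applies so $\rho$ is a genuine metric (in particular the triangle inequality used twice above is valid), and the assumption $0 < C_{\eta,\sigma} < \infty$ from Definition~\ref{d:laplace} lets us manipulate the normalizing constants freely. The only point worth flagging is that the argument is symmetric in $D$ and $D'$, so swapping their roles yields the matching lower bound and hence both directions of the DP inequality; and that everything is carried out on the Borel $\sigma$-algebra underlying $\mu$, so measurability of $m \mapsto e^{-\rho(\eta,m)/\sigma}$ is immediate from continuity of $\rho(\eta,\cdot)$.
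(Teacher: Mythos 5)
Your proposal is correct and follows essentially the same route as the paper: a pointwise bound on the density ratio via the triangle inequality contributing a factor $e^{\Delta/\sigma}$, a matching bound on the ratio of normalizing constants contributing a second factor $e^{\Delta/\sigma}$, and the observation that the latter factor disappears when $C_{\eta,\sigma}$ is independent of $\eta$. Nothing is missing.
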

\begin{proof}
({\it Sketch}) The proof follows from a direct verification via the triangle inequality.
\begin{comment}
A triangle inequality implies
\begin{align*}
 \exp( - \sigma \rho(f(D),m)) 
& \leq  \exp( \sigma \rho(f(D), f(D'))-\sigma \rho(m,f(D')) ) \\
& \leq \exp(\epsilon/2) \exp(-\sigma \rho(m, f(D'))).
\end{align*}
So we have that
\begin{align*}
& \int_{A} \exp( - \sigma \rho(f(D),m)) d \mu(m) 
 \leq e^{\epsilon/2} \int_{A} \exp( - \sigma \rho(f(D'),m)) d \mu(m).
\end{align*}
Turning to the normalizing constants, we can reverse $f(D)$ and $f(D')$ to obtain
\begin{align*}
C_{f(D'),\sigma} & = \int_{\mcM} \exp( - \sigma \rho(f(D'),m)) d \mu(m) \\
&  \leq e^{\epsilon/2}\int_{\mcM} \exp( - \sigma \rho(f(D),m)) d \mu(m)
 = e^{\epsilon/2} C_{f(D),\sigma},
\end{align*}
which implies
\[
C_{f(D),\sigma}^{-1} \leq e^{\epsilon/2} C_{f(D'), \sigma}^{-1}.
\]
We now combine everything to conclude that
\begin{align*}
& \int_{A} C_{f(D),\sigma}^{-1} \exp( - \sigma \rho(f(D),m)) d \mu(m) \\
& \leq e^{\epsilon} \int_{A} C_{f(D'),\sigma}^{-1} \exp( - \sigma \rho(f(D'),m)) d \mu(m),
\end{align*}
as desired.
\end{comment}
\end{proof}

In principle, it is possible to sample from the Laplace distribution using Markov Chain Monte Carlo (MCMC).  One can start the chain at $\eta$ and then make small proposed steps by randomly selecting a direction and radius on the tangent space.  The resulting tangent vector can be pushed to an element of $\mcM$ via the exponential map. Alternatively, if enough structure is known, as we illustrate in Section \ref{s:examples}, one may be able to sample from the distribution directly or if the space is bounded then one can use rejection sampling.

\textcolor{black}{An interesting alternative to our approach that is still inherently intrinsic is to instead generate the sanitised summary on a particular tangent space and then map it to the manifold using the exponential map.  On the surface, this seems like a reasonable idea, however there are some subtle technicalities that would have to be overcome.  In particular, one has to choose which tangent space to work with.  Ideally one would work with the tangent space at the summary of interest, but that isn't private.  If another plane is used, then there is the chance for more serious distortions from the noise.  Likely these issues could be overcome, but would require additional work.}

\section{Differentially Private Fr\'echet Means}
As before, suppose the data consists of $D = \{x_1,\dots,x_n\}$, but now with $x_i \in \mcM$. The sample Fr\'echet mean, $\bar x$, is defined to be the global minimizer of the energy functional
\[
\mcM \ni x \mapsto F_2(x):=\frac{1}{2n}\sum_{i=1}^n \rho^2(x, x_i)\thinspace,
\]
which is a natural generalization of the Euclidean mean to manifolds.  
Conditions that ensure existence and uniqueness of $\bar x$ have been extensively studied since its inception in the 1970s ~\citep{karcher1977riemannian,kendall1990probability}. 
Even when the mean is unique, the following example shows that the sensitivity need not decrease with the sample size, which produces sanitized estimates with low utility.  
\begin{example}
Let $x_1,\ldots,x_n$ be points on the unit circle $\mcM = \mcS^1= \{x \in \mbR^2: \|x\|= 1\}$ with arc-length distance $\rho$,  represented as angles such that $x_i=\frac{2 \pi i}{n-1}, i=1,\ldots,n-1$ and $x_n=x_{i'}$ for some $i' \in [n-1]$. 
Then minimizing $F_2$ occurs when we take $\bar x = x_i$.  
So, we can make the mean any of the $x_{i}$ by shifting a single point.  If $n$ is even, the furthest any two points can be is $\pi$ and the resulting sensitivity is $\pi$, which clearly does not decrease with $n.$
\end{example}

The above example illustrates that one must have some additional structure to ensure that the sample Fr\'echet mean as a statistic is stable and that the sensitivity is properly decreasing with the sample size. The first requirement is that the sample Fr\'echet mean is unique; this imposes strong constraints on the spread of the data on $\mcM$ given by its curvature. Denote by $B_r(m)$ the open geodesic ball at $m$ of radius $r$ in $\mcM$. For a given dataset $D$ with points in $\mcM$ we make the following assumption.
\begin{assumption}\label{a:main}
    %\item[(A1).]  
    The data $D \subseteq B_r(m_0)$ for some $m_0$, where $r <r^*:=\frac{1}{2}\min\{\normalfont{\text{inj}}\mcM, \frac{\pi}{2} \kappa^{-1/2} \}$ and $\kappa>0$ is an upper bound on the sectional curvatures of $\mcM$.
\end{assumption}
\textcolor{black}{For flat and negatively curved manifolds, Assumption \ref{a:main} only says that the data lies in some bounded ball.  In that case $\kappa \leq 0$ and we can interpret $\kappa^{-1/2}$ to be $+\infty$.  Furthermore, the $\text{inj}\mcM$ can be arbitrarily large.  Thus the radius of this ball only impacts the sensitivity, which is very common in data privacy.  However, it is, in general, difficult to relax Assumption \ref{a:main} for positively curved manifolds even when privacy isn't a concern. For example, it suffices to use the slightly weaker assumption} $r<\frac{1}{2}\min\{\text{inj}\mcM, \pi \kappa^{-1/2} \}$ to ensure that (i) the closure $\bar B_r(m_0)$ is geodesically convex;
(ii) $\bar x$ exists and is unique; and (iii) $\bar x$ belongs to the closure of the convex hull of points in $D$ \citep{Afsari2011}. For the unit sphere $\mcS^{d-1}$ in Example 1 we have $\text{inj}\mcM=\pi$ and $\kappa=1$, we require $r<\pi/2$ so that a dataset $D$ lying within a hemisphere on $S^{d-1}$ will have a unique sample Fr\'echet mean %for any $p \geq 2$ 
only if $D$ contains no point lying on the equator. However, we need the stronger Assumption \ref{a:main} to ensure that $(x,y) \mapsto \rho^2(x,y)$ is convex along geodesics for \emph{every} $x$ and $y$ inside $B_r(m_0)$ \citep{Le2001}, which is required to determine the sensitivity of $\bar x$.

In Theorem \ref{t:gs} we provide a bound on the global sensitivity of the Fr\'echet mean.  The bound depends on the sample size $n$, the radius $r$ of the ball that contains the data, and a function $h(r,\kappa)$ which depends only on $r$ and on the upper bound $\kappa$ of the sectional curvatures of $\mcM$.  For flat or negatively curved manifolds, we will see that $h(r,\kappa) = 1$, which matches classical results for the Euclidean space, {owing to the classical Hadamard-Cartan theorem that states that a simply connected $\mcM$ with non-negative sectional curvatures is diffeomorphic to $\mathbb R^d$}.  However, the situation is more subtle for positively curved manifolds where $h$ can no longer be ignored.  
%We also demonstrate numerically in Section \ref{s:examples} that this bound is apparently quite tight[\red{Is 'tight' still accurate?}].         

\begin{theorem}\label{t:gs}
 Under Assumption \ref{a:main} consider two datasets $D = \{x_1,\dots,x_{n-1},x_n\}$ and $D'=\{x_1,\dots, x_{n-1}, x_n'\}$ differing by only one element. If $\bar x$ and $\bar x'$ are the two sample Fr\'echet means of $D$ and $D'$ respectively, then
\begin{align*}
    \rho(\bar x, \bar x') & \leq \frac{2r(2-h(r,\kappa))}{n h(r,\kappa)}, & 
    h(r,\kappa) &= 
    \left\{
    \begin{array}{cc}
    2r \sqrt{\kappa}\cot(\sqrt{\kappa}2r) &  \kappa >0;\\
    1 & \kappa \leq 0
    \end{array}.
    \right.
\end{align*}

\end{theorem}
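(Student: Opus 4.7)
My approach would combine two pillars from Riemannian comparison geometry: (i) under Assumption~\ref{a:main}, both Fr\'echet functionals $F_2^D$ and $F_2^{D'}$ are $h(r,\kappa)$-strongly convex on the geodesically convex ball $B_r(m_0)$, and (ii) both minimisers $\bar x, \bar x'$ lie in this ball with vanishing Riemannian gradient. From these, a standard two-point strong-convexity argument bounds $\rho(\bar x, \bar x')$ in terms of the discrepancy caused by the single changed record.

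Concretely, I would first establish the pointwise Hessian lower bound
\[
\operatorname{Hess}_x \rho^2(\cdot, z)(\xi, \xi) \geq 2\, h(r,\kappa)\,\langle \xi,\xi\rangle_x \qquad \text{for all } x, z \in B_r(m_0), \ \xi \in T_x\mcM,
\]
via Rauch's Jacobi-field comparison: in a manifold of sectional curvature at most $\kappa$, the Hessian of $\rho^2(\cdot, z)$ at $x$ is controlled by Jacobi fields in the constant-curvature model space of curvature $\kappa$, and the cotangent expression $s\sqrt\kappa \cot(s\sqrt\kappa)$ emerges explicitly, attaining its worst value at the maximum separation $s = 2r$. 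Averaging over the data yields the $h(r,\kappa)$-strong convexity of both $F_2^D$ and $F_2^{D'}$. By \citet{Afsari2011}, Assumption~\ref{a:main} also guarantees unique means inside $B_r(m_0)$, so $\operatorname{grad} F_2^D(\bar x) = 0$ and $\operatorname{grad} F_2^{D'}(\bar x') = 0$. Combining strong convexity with these first-order conditions produces the pair of estimates $F_2^D(\bar x') - F_2^D(\bar x) \geq \tfrac{1}{2}h(r,\kappa)\rho^2(\bar x,\bar x')$ and $F_2^{D'}(\bar x) - F_2^{D'}(\bar x') \geq \tfrac{1}{2}h(r,\kappa)\rho^2(\bar x, \bar x')$.

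Summing these two inequalities and cancelling the $n-1$ shared summands reduces the left-hand side to a sum depending only on $x_n$ and $x_n'$. Bounding this via $a^2 - b^2 = (a-b)(a+b)$, the reverse triangle inequality $|\rho(\bar x', \cdot) - \rho(\bar x, \cdot)| \leq \rho(\bar x, \bar x')$, and the diameter bound $\rho(\bar x, x_n), \rho(\bar x', x_n) \leq 2r$ (valid because both means lie in the convex hull of $D$) produces a linear upper bound in $\rho(\bar x, \bar x')$. A careful accounting that re-incorporates the Hessian lower bound of $\rho^2(\cdot, x_n)$ along the geodesic from $\bar x$ to $\bar x'$, so that an additional $h\rho^2$ correction is absorbed into the left-hand side, is what produces the sharp factor $(2-h(r,\kappa))$ instead of the naive constant $2$, and dividing through by $\rho(\bar x, \bar x')$ delivers the stated bound. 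As a sanity check, in the flat or negatively curved regime $\kappa \leq 0$ one has $h \equiv 1$ and the estimate collapses to $2r/n$, matching the Euclidean sensitivity of the sample mean.

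I expect the Hessian comparison producing the cotangent constant $h(r,\kappa)$ to be the main technical obstacle: the Jacobi-field analysis must hold uniformly over all unit tangent directions at $x$ and over all $z \in B_r(m_0)$, and the cutoff $r < r^*$ of Assumption~\ref{a:main} is precisely what keeps $2r\sqrt\kappa < \pi/2$ so that $\cot(2r\sqrt\kappa) > 0$ and hence $h(r,\kappa) > 0$; without this we lose strong convexity and the entire perturbation argument collapses. The secondary subtlety is the algebraic bookkeeping producing $(2-h)$ in the numerator rather than a constant $2$ or $4$, but I regard that as a matter of careful computation rather than new geometric input.
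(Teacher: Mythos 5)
Your first pillar is sound and matches the paper: the Hessian comparison theorem gives $\ddot G_2 \geq h(r,\kappa)$ for $G_2 = F_2\circ\gamma$ along the unit-speed geodesic $\gamma$ joining $\bar x$ to $\bar x'$, and this is combined with the vanishing gradients at the two minimisers (the paper phrases this as a mean-value expansion of $\dot G_2$ rather than as two summed strong-convexity inequalities, but that difference is cosmetic). The genuine gap is in how you produce the numerator $2r(2-h(r,\kappa))$. Applying $a^2-b^2=(a-b)(a+b)$, the reverse triangle inequality, and the diameter bound $\rho(\cdot,\cdot)\le 2r$ to each of the two squared-distance differences separately bounds the left-hand side of your summed inequality by $\frac{1}{2n}\cdot 2\cdot(2r)\cdot 2\,\rho(\bar x,\bar x')$, hence $\rho(\bar x,\bar x')\le 4r/(nh)$ --- already off by a factor of $2$ in the flat case, where the true sensitivity is $2r/n$. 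Your proposed repair, ``re-incorporating the Hessian lower bound so that an additional $h\rho^2$ correction is absorbed,'' does not deliver $(2-h)$: carrying the second-order terms of the Taylor expansions of $t\mapsto\tfrac12\rho^2(\gamma(t),x_n)$ and $t\mapsto\tfrac12\rho^2(\gamma(t),x_n')$ only improves the bound to roughly $4r/(h(n+1))$, because a \emph{lower} bound on the Hessian cannot by itself control the size of the perturbation term from \emph{above}.

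The missing ingredient --- which the paper isolates as a separate lemma in the supplement --- is a uniform Lipschitz bound on the inverse exponential map, $\|\exp_m^{-1}(x)-\exp_m^{-1}(y)\|_m \le (2-h(r,\kappa))\,\rho(x,y)$ for all $m,x,y\in B_r(m_0)$, obtained from Karcher's Jacobi-field estimates as an upper bound on $\|\mathrm{D}\exp_m^{-1}\|$; this is a distinct comparison input from the Hessian lower bound that supplies the $h$ in the denominator. The paper applies it to the gradient difference $\nabla F_2(\bar x')-\nabla\widetilde F_2(\bar x') = -\frac1n\bigl[\exp^{-1}_{\bar x'}(x_n)-\exp^{-1}_{\bar x'}(x_n')\bigr]$, whose norm is then at most $\frac{1}{n}(2-h)\cdot 2r$. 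If you prefer your function-value route, the same lemma rescues it: the derivative of $t\mapsto \rho^2(\gamma(t),x_n)-\rho^2(\gamma(t),x_n')$ equals $2\langle \exp^{-1}_{\gamma(t)}(x_n')-\exp^{-1}_{\gamma(t)}(x_n),\dot\gamma(t)\rangle \le 2(2-h)\cdot 2r$, and integrating over $[0,\rho(\bar x,\bar x')]$ recovers exactly the stated constant. Without this lemma your argument proves a correct but strictly weaker sensitivity bound, and the route you sketch for sharpening it would fail.
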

\begin{proof}
Consider the energy functionals $F_2$ and $\widetilde F_2$ for the datasets $D$ and $D'$ with unique sample means $\bar x$ and $\bar x'$, respectively. Since $\mcM$ is complete the exponential map $\exp_x: T_x\mcM \to \mcM$ is surjective and under Assumption \ref{a:main} the log map or inverse exponential map $\mcM \ni y \mapsto \exp^{-1}_{x}(y) \in T_x\mcM$ is well-defined for every $x\in B_r(m_0)$.%$x \in \mcM$.

Here $x \mapsto \rho^2(x,y)$ is twice continuously differentiable, and under Assumption \ref{a:main} the function $x \mapsto \rho(x,y)$ is strictly convex for all $x,y \in B_r(m_0)$ \citep{karcher1977riemannian, Afsari2011}. Consider an arc length parameterized, unit speed minimizing geodesic $\gamma$ between $\bar x$  and $\bar x'$ such that $\gamma(0)=\bar x, \gamma(b)=\bar x'$ with $b= \rho(\bar x, \bar x')$. The composition, $G_2 := F_2 \circ \gamma: [0,b] \to \mbR$ is now a twice continuously differential real-valued function with derivatives $\dot G_2$ and $\ddot G_2$, and thus
\begin{align*}
\dot G_2(b)&=\dot G_2(0)+ b \ddot G_2(t_0) = \rho(\bar x, \bar x') \ddot G_2(t_0),
\end{align*}
for some $0 \leq t_0 \leq b$ since $\dot G_2(0) = 0.$ 

To determine $\ddot G_2(t_0)$, we need to calculate the second derivative of $\rho(\gamma(t_0+\epsilon),q)^2$ evaluated at $\epsilon = 0$ and for an arbitrary $q \in B_r(m_0)$, which equals
\[
2\left( \frac{\text{d}}{\text{d}\epsilon}\rho(\gamma(t_0+\epsilon),q)\big|_{\epsilon=0} \right)^2 + 2\rho(\gamma(t_0),q)  
 \frac{\text{d}^2}{\text{d}\epsilon^2}\rho(\gamma(t_0+\epsilon),q)\big|_{\epsilon=0}\thickspace.
 \]
Let $\beta_q$ be the angle between $\dot{\gamma}(t_0)$ and $\dot \alpha_q(t_0)$ formed in $T_{z}\mcM$, where $\alpha_q$ is a minimizing geodesic from $q$ to $\gamma(t_0)$; this implies that $\frac{\text{d}}{\text{d}\epsilon}\rho(\gamma(t_0+\epsilon),q)\big|_{\epsilon=0}=\langle \nabla \rho(\gamma(t_0),q),\dot \gamma(t_0) \rangle_{z}=\cos \beta_{q}$, with $\nabla$ as the Riemannian gradient, since $\frac{\text{d}}{\text{dt}}\rho(\gamma(t),q)|_{t=0}=\dot 
\gamma(\rho(\bar x,q))$ and $\gamma$ is a unit-speed geodesic. As a consequence, with minimizing geodesics $\alpha_{x_i}$ from $x_i$ to $z=\gamma(t_0)$ and corresponding angles $\beta_{x_i}$,
\[
\ddot G_2(t_0)=\frac{\text{d}^2}{\text{d}\epsilon^2}F_2(\gamma(t_0+\epsilon))\big |_{\epsilon=0}=\frac{1}{n}\sum_{i=1}^n \left[\cos ^2 \beta_{x_i}+\rho(z,x_i)\frac{\text{d}^2}{\text{d}\epsilon^2}\rho(\gamma(t_0+\epsilon),x_i)\big|_{\epsilon=0}\right].
\]
Note that if $z=x_i$ for any $i$, the angle $\beta_{x_i}$ is not well-defined, but regardless of the chosen path $\alpha_{x_i}$ the contribution to the sum from the particular $x_i$ will be one. 
The Hessian $\frac{\text{d}^2}{\text{d}\epsilon^2}\rho(\gamma(t_0+\epsilon),x_i)\big|_{\epsilon=0}$ of the distance function can be lower bounded using the Hessian comparison theorem \citep[e.g.][Theorem 11.7]{lee2018introduction} to obtain
\[
\ddot G_2(t_0) \geq \frac{1}{n}\sum_{i=1}^n[\cos^2\beta_{x_i}+a(\rho(z,x_i),\kappa)\sin^2\beta_{x_i}],
\]
where
\[\arraycolsep=1.4pt\def\arraystretch{1.4}
a(s,\kappa)=\left \{
\begin{array}{cc}
s\sqrt{\kappa}\cot(\sqrt{\kappa}s)&\quad \kappa>0;\\
s^{-1}& \quad \kappa=0;\\
s\sqrt{|\kappa|}\coth(\sqrt{|\kappa|}s)& \quad \kappa<0 \thinspace.
\end{array}
\right.
\]
If $\kappa>0$, then $(s,\kappa)\mapsto a(s,\kappa) \leq 1$  and decreasing; on the other hand if $\kappa<0$, $(s,\kappa)\mapsto a(s,\kappa) \geq 1$  and increasing. We hence have that
\begin{equation}
\label{eq:hfun}
\arraycolsep=1.4pt\def\arraystretch{1.4}
\ddot G_2(t_0) \geq 
h(r,\kappa):=
\left\{
\begin{array}{cc}
2r \sqrt{\kappa}\cot(\sqrt{\kappa}2r) &  \kappa >0;\\
1 & \kappa \leq 0,
\end{array}
\right.
\end{equation}
since for $\kappa=0$, $a(s,\kappa) \geq (2r)^{-1}$ and we can choose $r=1/2$ since it is effectively unconstrained in this setting. The lower bound on $\ddot G_2(t_0)$ thus depends on whether $\mcM$ is positively or non-negatively curved depending on the sign of $\kappa$. This results in 
\[
\rho(\bar x, \bar x') \leq \frac{\dot G_2(b)}{h(r,\kappa)}=
\frac{1}{h(r,\kappa)}[\dot G_2(b)-\dot {\widetilde G}_2(b)],
\]
where $\dot {\widetilde G}_2(b)=0$ since $\nabla \widetilde F_2(\bar x')=\bm 0$. For any $x \in \mcM$ , in normal coordinates, the gradients
\[
\nabla \widetilde F_2(x)=-\frac{1}{n}\left[ \sum_{i=1}^{n-1} \exp^{-1}_{x}(x_i) + \exp^{-1}_x(x_n')\right], \quad  
\nabla F_2(x)=-\frac{1}{n} \left[\sum_{i=1}^{n-1} \exp^{-1}_{x}(x_i) + \exp^{-1}_x(x_n)\right],
\]
belong to $T_x\mcM$. This leads to the desired result since
\[
\rho(\bar x, \bar x') \leq \frac{1}{n h(r,\kappa)}\left\|\exp^{-1}_{\bar x'}(x_n)-\exp^{-1}_{\bar x'}(x_n')\right\|_{\bar x'}
\leq \frac{2r(2-h(r,\kappa))}{nh(r,\kappa)},
\]
using Lemma 1 in the Supplemental based on Jacobi field estimates \citep{karcher1977riemannian}.

\end{proof}

In our next Theorem we provide a guarantee on the utility of our mechanism.  We demonstrate that, in general, the magnitude of the privacy noise added is $O(dr/n\epsilon)$.  Classic results on $\epsilon$-DP \citep[e.g.][]{hardt2010geometry} in $\mbR^d$ typically do not calculate sensitivity based on a Euclidean ball, instead focusing on privatizing each coordinate separately, in which case the optimal privacy noise is $O(d^{3/2} r/n\epsilon)$.   To reconcile the two, the classic rate can equivalently be thought of as calculating sensitivity using an $\ell_\infty$ ball. It is easy to verify that it requires an $\ell_2$ ball of radius $r\sqrt{d}$ to cover an $\ell_\infty$ ball of radius $r$, in which case the two rates agree, meaning that our mechanism is rate optimal.   

\begin{theorem}\label{t:utility}
Let the Assumptions of Theorem \ref{t:gs} hold.  
Let $\widetilde x$ denote a draw from the Laplace mechanism conditioned on being in $B_r(m_0)$.  Assume that $n$ and $\epsilon$ are such that $\sigma\to 0$.  Then $\widetilde x$ is $\epsilon$-DP and furthermore
\[
\E \rho(\widetilde x, \bar x)^2 = O\left( \frac{d^2 r^2}{n^2 \epsilon^2}  \right). 
\]
\end{theorem}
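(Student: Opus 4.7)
The plan is to first verify $\epsilon$-DP and then bound the second moment of $\rho(\widetilde x, \bar x)$. The privacy claim follows almost immediately: Theorem \ref{t:gs} delivers a global sensitivity $\Delta = 2r(2-h(r,\kappa))/(n h(r,\kappa)) = O(r/n)$, whence Theorem \ref{t:dp} certifies that the Laplace mechanism with footpoint $\bar x$ and rate $\sigma = 2\Delta/\epsilon$ is $\epsilon$-DP. Conditioning on the data-independent set $B_r(m_0)$ preserves the guarantee, since the triangle-inequality argument in the proof of Theorem \ref{t:dp} applies verbatim to the restricted density once both numerator and denominator are truncated by the same indicator.

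For the utility bound I would transport the computation to Euclidean space through the exponential chart at $\bar x$. Under Assumption \ref{a:main}, $\bar x$ lies in the closure of the convex hull of $D \subseteq B_r(m_0)$, so $B_r(m_0) \subset \bar B_{2r}(\bar x)$ and $2r < \text{inj}\,\mcM$; hence $\exp_{\bar x}^{-1}$ is a diffeomorphism on $B_r(m_0)$, and with $V := \exp_{\bar x}^{-1}(\widetilde x)$ one has $\rho(\bar x, \widetilde x) = \|V\|_{\bar x}$. The Riemannian volume pushes forward to $\theta(v)\,dv$ on $T_{\bar x}\mcM \cong \mbR^d$, where $\theta$ is the Jacobian of $\exp_{\bar x}$. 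Since the closed ball $\bar B_{2r}(\bar x)$ is compact, its sectional curvatures are bounded both above and below, and Jacobi field / Rauch comparison yields constants $0 < c_1 \le \theta(v) \le c_2 < \infty$ uniformly for $\|v\| \le 2r$, depending only on $r$ and these curvature bounds.

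The pushforward density of $V$ with respect to Lebesgue measure is then bounded by $(c_2/c_1) Z_0^{-1} e^{-\|v\|/\sigma} \mathbf{1}_{\{\|v\| \le 2r\}}(1+o(1))$, where $Z_0 = \int_{\mbR^d} e^{-\|v\|/\sigma}\,dv$ and the $(1+o(1))$ absorbs the exponentially small truncation correction to the normalizer in the regime $\sigma \to 0$. A polar-coordinates computation shows that under the density proportional to $e^{-\|v\|/\sigma}$ on $\mbR^d$, the radial coordinate $\|V\|$ is $\mathrm{Gamma}(d,\sigma)$-distributed, with second moment $d(d+1)\sigma^2 = O(d^2 \sigma^2)$. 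Truncation to $\{\|v\| \le 2r\}$ only shaves off exponentially small tail mass as $\sigma \to 0$, so substituting $\sigma = O(r/(n\epsilon))$ yields $\E \rho(\widetilde x, \bar x)^2 = O(d^2 r^2/(n^2 \epsilon^2))$, as claimed. The main obstacle is the uniform two-sided control of the Jacobian $\theta$: Assumption \ref{a:main} posits only an upper curvature bound $\kappa$, so I would exploit compactness of $\bar B_{2r}(\bar x)$ to supply a matching lower curvature bound, and then verify that the resulting constants $c_1, c_2$ do not scale with $d$, $n$, or $\epsilon$, so the $O(d^2)$ rate is preserved.
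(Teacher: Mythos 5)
Your proposal is correct and follows essentially the same route as the paper: pull the conditioned Laplace back through the exponential chart at the footpoint, control the Jacobian of $\exp$ two-sidedly by compactness, and reduce the second moment to the radial $\mathrm{Gamma}(d,\sigma)$ computation for the Euclidean $\ell_2$ K-norm density, giving $O(d^2\sigma^2)$ with $\sigma = O(r/n\epsilon)$. Your version is, if anything, slightly more careful than the paper's on two points the paper glosses over: centering the chart at $\bar x$ rather than $m_0$ (which is what actually makes $\rho(\bar x,\widetilde x)=\|V\|$ exact) and noting that the uniform Jacobian bounds require a lower sectional-curvature bound supplied by compactness of $\bar B_{2r}(\bar x)$.
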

\begin{proof}
First, $\widetilde x$ conditioned on falling in $B_r(m_0)$ guarantees the existence of the Laplace distribution (over $B_r(m_0)$) since it is now clearly integrable.  That $\widetilde x$ is DP follows from the same arguments as Theorem \ref{t:dp}.

Turning to our utility guarantee, first notice that $r$ was chosen such that there exists a set $A_r \subset T_{m_o}$ such that the $\exp_{m_0}: A_r \to B_r(m_0)$ is a diffeomorphism.  Identifying $T_{m_0}\mcM$ with $\mbR^d$, we also have that $A_r$ is a ball centered at $\bm0$ with radius $r$ (as measured using the inner product $\langle \cdot, \cdot \rangle_{m_0}$).  

A change-of-variables, $\exp_{m_0}(\widetilde v) = \widetilde x$, implies that $\widetilde v$ has density equal to
\[
f(v) = c_{f,\sigma}^{-1}  e^{-  |v|/ \sigma}  |J_v|
\]
with support on $A_r$, where $|J_v|$ is the determinant of the Jacobian of $\exp_{m_0}$.  This is not the K-norm distribution over $\mbR^d$ unless the Jacobian is constant in $v$.  Since the set $A_r$ is compact, the determinant of the Jacobian is bounded from above and from below (away from 0), we can find constants $c_1$ and $c_2$, independent of $\sigma$, satisfying
\[
c_1 e^{-  |v|/ \sigma} 
\leq  e^{- |v|/ \sigma} |J_v| \leq c_2  e^{-  |v|/ \sigma} .
\]
We can use this to bound the desired expected value as
\[
\E \rho(\widetilde x, \bar x)^2 = 
c_{f,\sigma}^{-1} \int_{A_r} |v|^2 e^{-  |v|/ \sigma} |J_v| \ dv
\leq \left[ c_1  \int_{A_{r}} e^{-  |v|/ \sigma} \ dv \right]^{-1}
c_2  \int_{A_{r}} |v|^2 e^{- |v|/ \sigma} \ dv.
\]
Using a change of variables with $u = \sigma^{-1} v$ we have that 
\[
\E \rho(\widetilde x, \bar x)^2 
\leq \frac{c_2 \sigma^{2} }{c_1}
 \left[ \int_{A_{r / \sigma}} e^{-  |u|} \ du \right]^{-1}
  \int_{A_{r / \sigma}} |u|^2 e^{-  |u|} \ du.
\]
This, however, is simply $\sigma^{2}2 c_2/ c_1$ multiplied by the expected squared norm of a Euclidean Laplace that is conditioned on falling within $A_{r/ \sigma}$.  If we remove the condition that the Laplace falls within $A_{r/\sigma}$ the value necessarily increases, thus we have that
\[
\E \rho(\widetilde x, \bar x)^2 
\leq \frac{c_2 \sigma^{2} }{c_1}
 \left[ \int_{\mbR^d} e^{-  |u|} \ du \right]^{-1}
  \int_{\mbR^d} |u|^2 e^{-  |u|} \ du.
 % = \frac{c_2 \sigma^2 d}{c_1}.
\]
Using a change of variables in both integrals to spherical coordinates and noting $\sigma = O (r/n\epsilon)$ as in Theorems \ref{t:dp} and \ref{t:gs}, we get that
\[
\E \rho(\widetilde x, \bar x)^2 
\leq \frac{c_2 \sigma^{2} }{c_1}
\left[ \int_0^\infty y^{d-1} e^{-y} \ dy  \right]^{-1} \int_0^\infty y^{d+1} e^{-y} \ dy 
= \frac{c_2 \sigma^{2} d (d-1)}{c_1} = O\left( \frac{d^2 r^2}{n^2 \epsilon^2} \right).
\]
%The final claim of the theorem follows from Jensen's inequality.

\end{proof}

\begin{comment}
\begin{theorem} \label{t:tv}
Let the Assumptions of Theorem \ref{t:utility}.  Let $\widetilde v$ be the Laplace distribution over the tangent space $T_{\bar x}$.  Then,
\[
TV(\widetilde x, \exp_{\bar x}(\widetilde v) ) \to 0,
\]
where $TV$ is the total variation distance. 
\end{theorem}
\begin{proof}
Consider the TV distance on the tangent space $T_{\bar x}$:
\[
TV(\widetilde x, \exp_{\bar x}(\widetilde v) )
= \int_{\mbR^d}
\left|
c_{f,\sigma} e^{-|v|/\sigma} |J_v| 1_{v \in A_r}- 
c_{d,v}e^{-|v|/\sigma} 
\right| \ dv
\]
where $c_{d,v}$ is the normalizing constant for the Euclidean Laplace.
\end{proof}
\end{comment}

Our final Theorem focuses on the case of linear manifolds to highlight mathematically the benefit of constructing the privacy mechanism directly on the manifold as opposed to an ambient space and then projecting back onto the manifold.  Practically, the variance of the mechanism is inflated by a factor of $D/d$ where $d$ and $D$ are the dimensions of the manifold and ambient space respectively with $d \leq D$. Intuitively, if the ambient space is of a higher dimension than the manifold, then one has to expend additional privacy budget to privatise the additional dimensions, which our approach avoids.  Since the mechanism concentrates around a single point as the sample size grows (and thus one can use a single tangent space to parameterise the problem), one should be able to extend this to more general manifolds, though for ease of exposition we focus on the linear case.  

\begin{theorem} \label{t:proj}
Let $\mcM \subset \mbR^D$ be a $d$-dimensional linear subspace of $\mbR^D$ equipped with the Euclidean metric. Assume the assumptions of Theorem 3 hold. Let $\widetilde x_D$ denote the private summary generated from the Laplace over $\mbR^D$ with scale $\sigma$, in the sense of Definition \ref{d:laplace} (equivalently, this is the K-norm mechanism with the $\ell_2$ norm). Then
\[
\E\|\mcP_\mcM \widetilde x_D - \bar x\|^2 = O\left( \frac{d D r^2}{n^2 \epsilon^2} \right),
\]
where $\mcP_\mcM$ is the projection operator onto $\mcM.$
\end{theorem}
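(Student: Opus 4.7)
My plan is to reduce the statement to a computation of the expected squared norm of a rotationally symmetric random vector projected onto a $d$-dimensional subspace. Since $\mcM$ is a linear subspace equipped with the Euclidean metric, the sample Fr\'echet mean $\bar x$ coincides with the arithmetic mean of $x_1,\dots,x_n$ and therefore lies in $\mcM$. Writing $\widetilde x_D = \bar x + Z$, where $Z$ is a centered K-norm (Laplace) random vector on $\mbR^D$ with scale $\sigma$, linearity and idempotency of $\mcP_\mcM$ give $\mcP_\mcM \widetilde x_D - \bar x = \mcP_\mcM Z$. So it suffices to bound $\E\|\mcP_\mcM Z\|^2$.

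Next I would pin down $\sigma$ using Theorems \ref{t:dp} and \ref{t:gs}. The intrinsic distance on $\mcM$ coincides with $\|\cdot\|$, and a linear subspace is flat ($\kappa\le 0$, so $h\equiv 1$ in Theorem \ref{t:gs}). Thus the global sensitivity of $\bar x$ with respect to $\|\cdot\|$ is $\Delta = 2r/n$. The K-norm density on $\mbR^D$ is translation invariant, so its normalizing constant is independent of the footpoint, and Theorem \ref{t:dp} gives $\sigma = \Delta/\epsilon = O(r/(n\epsilon))$.

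The main step is then to compute $\E\|\mcP_\mcM Z\|^2$. Because the density of $Z$ depends only on $\|z\|$, the vector $Z$ is rotationally symmetric, whence $\operatorname{Cov}(Z) = (\E\|Z\|^2/D)\,I_D$. A pass to spherical coordinates yields
\[
\E\|Z\|^2 \;=\; \sigma^2\,\frac{\int_0^\infty y^{D+1} e^{-y}\,dy}{\int_0^\infty y^{D-1} e^{-y}\,dy} \;=\; \sigma^2 D(D+1).
\]
Consequently $\E\|\mcP_\mcM Z\|^2 = \operatorname{tr}(\mcP_\mcM \operatorname{Cov}(Z)) = d(D+1)\sigma^2 = O(dDr^2/(n^2\epsilon^2))$, as claimed.

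I do not anticipate a genuine obstacle here: the argument is essentially the Euclidean K-norm variance calculation restricted to a subspace. The conceptual content lies in the rotational-symmetry step, which is exactly what produces the factor $d/D$ relative to $\E\|\widetilde x_D - \bar x\|^2 = O(D^2 r^2/(n^2\epsilon^2))$: projecting onto $\mcM$ recovers only $d$ of the $D$ orthogonal noise directions, so the inflation relative to the intrinsic rate $O(d^2 r^2/(n^2\epsilon^2))$ of Theorem \ref{t:utility} is a factor $D/d$, the cost of having paid for privacy in the full ambient space.
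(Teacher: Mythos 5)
Your proposal is correct and follows essentially the same route as the paper: both reduce to $\E\|\mcP_\mcM Z\|^2$ for the centered spherically symmetric K-norm noise $Z=\sigma R\bU$, compute $\E\|Z\|^2=\sigma^2 D(D+1)$ from the Gamma$(D,1)$ radial law, and extract the factor $d/D$ from rotational symmetry. The only cosmetic difference is that you obtain that factor via $\operatorname{Cov}(Z)=(\E\|Z\|^2/D)I_D$ and a trace identity, whereas the paper notes that $(\langle\bU,v_i\rangle^2)_i$ is Dirichlet so each coordinate is $\mathrm{Beta}(1,D-1)$ with mean $1/D$ --- the same computation in different clothing.
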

 \begin{proof}
Choose $\{v_1,\dots, v_D\}$ as an orthonormal basis of $\mathbb R^D$ such that the matrix $\mcP_\mcM=VV^T$ is an orthogonal projector onto $\mcM$, where $V=[v_1,\dots,v_d] \in \mathbb R^{D \times d}$ and $V^TV=\mathbb I_d$. Let $\langle \cdot, \cdot \rangle $ be the usual inner product on $\mathbb R^D$ with norm $\| \cdot \|$.  
 Note that $\widetilde x_D = \bar x + \sigma R \bU$, where $R$ distributed as $\text{Gamma}(D,1)$, $\bU$ is uniform on $\mathcal S^{D-1}$ (i.e. $\langle \bU, \bU \rangle =1$) independent of $R$, and $\bar x \in \mcM$ (see Supplemental \ref{ss:euc_lap} for details). Then \textbf{}
 \[
 \E\|\mcP_\mcM \widetilde x_D - \bar x\|^2
 = \E\|\sigma R\mcP_\mcM \bm U \|^2=\sigma^2 \E[R^2 ]\sum_{i=1}^d \E[\langle \bU, v_i\rangle^2].
 \]
 Since $\bU$ is uniform on the sphere, it follows that the vector $(\langle \bU, v_1 \rangle^2,\dots, \langle \bU, v_D \rangle^2)$ follows a Dirichlet distribution with concentration parameters all equal to 1. Thus, for each $i$, $\langle \bU, v_i\rangle^2$ is distributed as $\text{Beta}(1, D-1)$. This completes the proof as
 \[
  \E\|\mcP_\mcM \widetilde x_D - \bar x\|^2
  = d \sigma^2 (D+D^2) \frac{1}{D}
  = \sigma^2 d (D+1).
 \]
  \end{proof}

\section{Examples} \label{s:examples}

In this section we numerically explore two examples that are common in statistics.  In the first example, we consider the space of symmetric positive definite matrices (SPDM) equipped with the Rao-Fisher affine invariant metric, under which the space is a negatively curved manifold.  In the second example we consider data lying on the sphere, which can be used to model discrete distributions or compositional data, as an example of a positively curved manifold.  In both cases we demonstrate substantial gains in utility when privatizing the Fr\'echet mean using our proposed mechanism against using a more standard Euclidean approach that utilizes an ambient space.  Other examples of Riemannian manifolds that commonly arise in statistics include the Steifel manifold for PCA projections, quotient spaces for modeling shapes, and hyperbolic spaces for modelling phylogenetic structures. Simulations are done in Matlab on a desktop computer with an Intel Xeon processor at 3.60GHz with 31.9 GB of RAM running Windows 10.  Additional details on each example are also provided in the supplemental.

\subsection{SPDM}\label{ss:SPDM}
Let $\mbPk$ denote the space of $k\times k$ symmetric positive definite matrices. In addition to being used for modeling covariance matrices, this space is widely used in engineering of brain-computer interfaces \citep{congedo2017riemannian}, computer vision \citep{zheng2014fast}, and radar signal processing \citep{arnaudon2013riemannian}.

We consider $\mbPk$ equipped with the Riemannian metric $\langle v , u\rangle_p = \text{Tr}(p^{-1} u p^{-1} v) $, known as the Rao-Fisher or Affine Invariant metric where $u,v \in T_{p}\mbPk$ are symmetric matrices. The metric makes $\mbPk$ into a manifold with negative sectional curvature \citep[e.g.,][]{helgason}.
Under this metric $\exp_p(v) = p^{1/2}\text{Exp}\left(p^{-1/2}vp^{-1/2}\right)p^{1/2}$ and $\exp^{-1}_q(p) = q^{1/2}\text{Log}\left(q^{-1/2}p q^{-1/2}\right)q^{1/2}$, where $\text{Exp}$ and $\text{Log}$ are matrix exponential and logarithm respectively. The (squared) distance between $q,p\in\mbPk$ is given in closed form by $\rho^2(q,p) = \text{Tr}[\text{Log}(q^{-1/2}pq^{-1/2})^2]$; these expressions are widely available \citep[e.g.,][]{hajri2016riemannian, said2017riemannian}. To calculate the Fr\'echet mean of a sample we use a gradient descent algorithm as proposed in \cite{Le2001} (Supplemental material 1.1). 

We generate samples $D=\{x_1,\cdots,x_{n-1},x_n\}$ from $\mbPk$ using the Wishart distribution as discussed in the Supplemental material \ref{ss:SPDMGS}.
In the first and second panels of Figure ~\ref{fig:SPDM} we show simulation results which illustrate Theorems ~\ref{t:gs} and ~\ref{t:utility} and compare the utility of the Euclidean counterpart. In the first panel we illustrate the sensitivity by plotting $\rho(\bar x, \bar x')$, for neighboring databases, as blue dots as well as the theoretical bound 
The blue line is the average distance at each sample size.

We compute the Fr\'echet mean $\bar{x}$ and then privatize the mean using two approaches: (i) we generate the privatized mean $\widetilde x$ by sampling using the Laplace distribution from Definition \ref{d:laplace} defined directly on $\mbPk$ with footpoint $\bar x$ using the algorithm in \cite{hajri2016riemannian}; (ii) we use the embedding of $\mbPk$ into the set of $k \times k$ symmetric matrices, isomorphic to $\mathbb R^{k(k+1)/2}$, to represent $\bar x$ as a vectorized matrix $\vech(\bar x)$ in $\mathbb R^3$ (i.e., $k=2$) and obtain a privatized mean $\vech(\widetilde x_E)$ by adding to $\vech(\bar x)$ a vector drawn from the standard Euclidean Laplace. Then $\widetilde{x}_E$ is obtained by reverting to the matrix representation, which is not guaranteed to stay in $\mbPk$ but is symmetric by construction. 

In the second panel we plot the average, across repetitions, of the distances $\|\vech(\bar{x})-\vech(\widetilde{x})\|$ (blue) and  $\|\vech(\bar{x})-\vech(\widetilde{x}_E)\|$ (red).  Since the Euclidean summary need not belong to $\mcM$, using the Euclidean distance enables a common comparison between the two methods. The shaded regions around the lines correspond to $\pm 2\text{SE}$, where $\text{SE}$ is the standard error of the average distances.  

Examining panel 2 in Figure \ref{fig:SPDM}, we see that our approach has better utility even when calculated using the Euclidean distance. Here, the ambient space approach does not increase the dimension of the statistic since $\mbPk$ and the space of symmetric matrices are of the same dimension, thus the gain in utility appears to be primarily due to respecting the geometry of the problem.  
Furthermore, as expected for smaller sample sizes, approach (ii) can produce summaries that are not positive definite, with about 25\% not being in $\mbPk$ at sample sizes 20-40. 

\begin{figure}
    \centering
    \begin{tabular}{cccc}
        \includegraphics[trim = 0 0 20 0,clip,height = 0.95in]{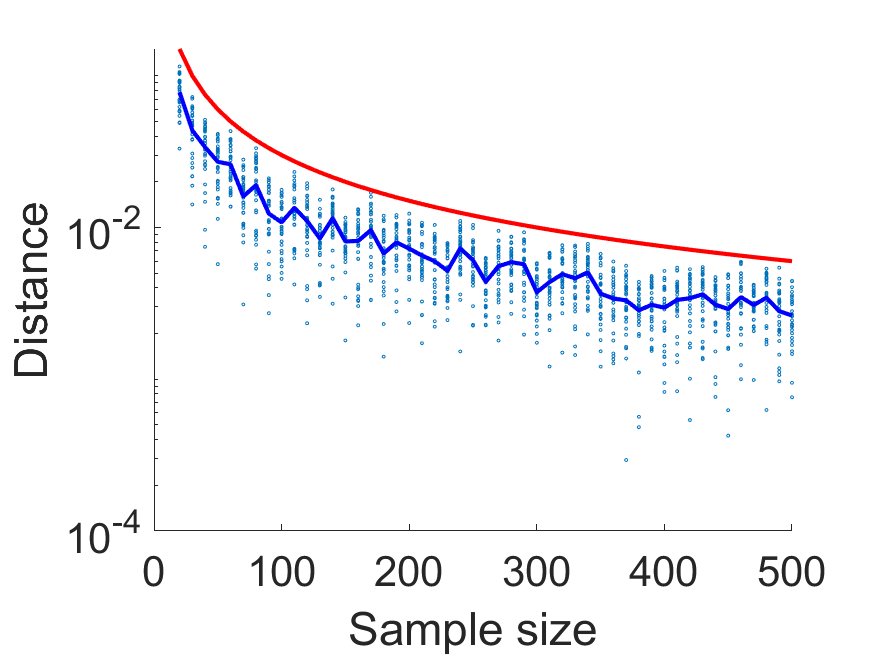}
        &
        \includegraphics[trim = 0 0 20 0,clip,height = 0.95in]{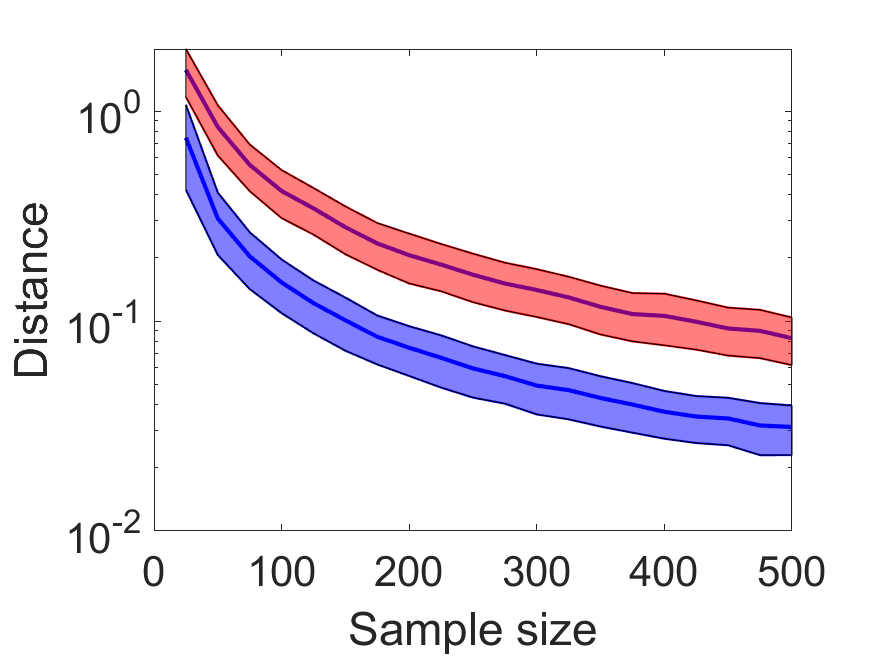}
        & 
        \includegraphics[trim = 0 0 20 0,clip,height = 0.95in]{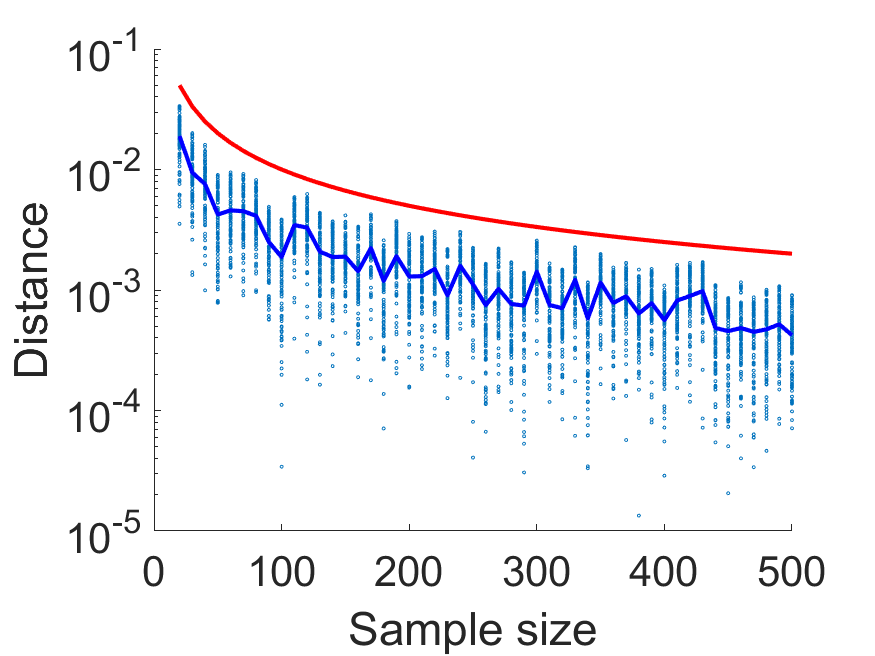}
        & 
        \includegraphics[trim = 0 0 20 0,clip,height = 0.95in]{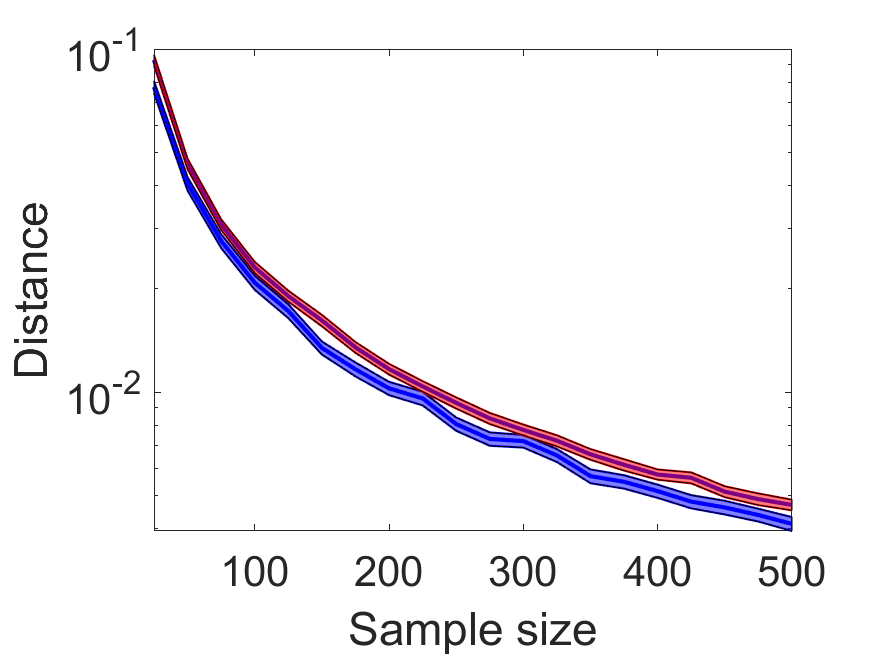}
    \end{tabular}
     \caption{For the first and second panel are for $\mathbb{P}(2)$, while the third and fourth for $\mathcal S^2_1$. First and third panel:  The blue points are the manifold distance between the Fr\'echet means $\bar{x}$ and $\bar{x}'$ of $D=\{x_1,\cdots,x_{n-1},x_n\}$ and $D'=\{x_1,\cdots,x_{n-1},x'_n\}$, respectively. The blue line is the average distance at each sample size and the red line is the theoretical bound on the sensitivity from Theorem \ref{t:gs}. %, \red{$2rn^{-1}$ for $\mathbb{P}(2)$ and $n^{-1}$ for $S^2_1$}[\red{Are these correct?}]. 
     Second and fourth panel: At each sample size we generate several replicates (1000 for each $\mathbb{P}(2)$ and $\mathcal S^2_1$) and compute the Fr\'echet mean $\bar{x}$.  We then separately privatize the Fr\'echet mean twice, first on the manifold which results in $\widetilde{x}$ and second embedding the mean onto Euclidean space, $\mbR^3$ in both cases, which results in $\widetilde{x}_E$. The blue bars represent the average of the Euclidean distances between $\bar{x}$ and $\widetilde{x}$ with the bounds $\pm 2\text{SE}$; the red bars represent the average of the Euclidean distances between $\bar{x}$ and $\widetilde{x}_E$ with the bounds $\pm 2\text{SE}$. For further detail see sections \ref{ss:SPDM}, \ref{ss:Spheres}, and the supplemental.}
    \label{fig:SPDM}
\end{figure}
%-----------------------------------------------------------------%

\subsection{Spheres}\label{ss:Spheres}
Let $\mathcal S_\kappa^{d}$ denote a $d$-dimensional sphere of radius $\kappa^{-1/2}$ parameterized such that the sectional curvature is constant $\kappa>0$. Identifying the sphere as a subset of $\mathbb R^{d+1}$, the tangent space at $p$ is  $T_p \mathcal S_\kappa^d= \{v \in \mbR^{d+1}: \langle v,p \rangle = 0\}.$ The exponential map, defined on all of $T_p\mathcal S_\kappa^{d}$, is given by
$ \exp_p(v) = \cos(\|v\|) p + \kappa^{-1/2} \sin(\|v\|) v/\|v\|$ and $\exp_p(\bm 0)=p$.  
The inverse exponential map $\exp^{-1}_p:\mathcal S^{d}_\kappa \to T_p \mathcal S_\kappa^d$ is defined only within the ball of radius strictly smaller than $\pi/2$ around $p$
and is given by $\exp^{-1}_p(q) = \frac{\theta}{\sin(\theta)} (q-\cos(\theta) p)$ with $q \neq \{p,-p\}$. The corresponding distance function is $\rho(p,q) = \theta$ where $\theta = \cos^{-1}(\langle p,q\rangle)$ and $p,q\in \mathcal S_\kappa^d$.

In similar fashion to Section \ref{ss:SPDM}, in the third and fourth panels of Figure \ref{fig:SPDM} we show simulation results which illustrate Theorems \ref{t:gs} and \ref{t:utility}, and compare utility to its Euclidean counterpart. For our simulations we fix $d=2$ and $\kappa=1$. Consistent with Assumption 1 on support of the data, we choose a ball $B_r(m_0)$ of radius $r=\pi/8<r^*=\pi/2$ and take $m_0$ as the north pole. We generate random samples as shown in the Supplemental material \ref{ss:SphereGS}.
The red line again corresponds to the theoretical bound  %\red{$2r (n h(r,\kappa))^{-1} = (n \sqrt{\kappa})^{-1} \tan(2r\sqrt{\kappa}) = n^{-1}$}[\red{needs fixing.}] under the assumed setting. 
from Theorem \ref{t:gs}.  
The (unique) Fr\'echet mean $\bar x$ is computed using a gradient descent algorithm. 

In the last panel we compare utility of the privatized means, again obtained using two approaches: (i) exactly as in approach (i) with SPDM; (ii) using the embedding of $\bar{x}$ into $\mathbb R^3$ (Cartesian coordinates) to represent $\bar x$ and obtaining a private $\widetilde x_E$ by adding to $\bar x$ a draw from the Euclidean Laplace. We display the average Euclidean distance between the mean and private mean $\pm2\text{SE}$, where $\text{SE}$ is the standard error of the distances at each sample size; we use 1000 replicates at each sample size. The blue band is obtained using approach (i) and the red band using approach (ii). While the contrast between (i) and (ii) is not as stark as with SPDM, our approach still produces about 15\% less noise, with an average of 16.8\% reduction in the smaller sample sizes and 12\% reduction in the larger sample sizes.  Furthermore, unlike in the SPDM case, the Euclidean private summary is never on the manifold since $\mcM$ as a subset of $\mbR^{d+1}$ has measure zero.   

\section{Conclusions and Future Work} \label{s:conc}
In this paper we have demonstrated how to achieve pure differential privacy over Riemannian manifolds by relying on the intrinsic structure of the manifold instead of the structure induced by a higher-dimensional ambient or embedding space.  Practically, this ensures that the private summary preserves the same geometric properties as the non-private one. Theorem \ref{t:utility} shows that our mechanism matches the known optimal rates for linear spaces for the Fr\'echet mean, while Theorem \ref{t:proj} highlights the benefit of the intrinsic approach in contrast to projection-based ones using an ambient space. 

The benefits of directly working on the manifold come at the expense of a more complicated mathematical and computational framework, challenges with which vary between different manifolds depending on availability of closed-form expressions for geometric quantities. Conversely, working in a linear ambient space is usually computationally simpler, although other issues abound: projecting onto the manifold may not be possible (e.g. SPDM under negative curvature), or the projection may lead to poor utility in high-curvature places on the manifold. 

As is well appreciated in the mathematics/statistics literature, there are challenges that are unique to positively curved spaces, which we also encounter here.  The central issue is that the squared distance function need not be convex over large enough areas, and strong restrictions on the spread of the data are required to ensure that underlying summaries are unique and well defined. This phenomenon manifests in the form of a correction term $h$ in our results, which impacts sensitivity of the statistics.  Numerical illustrations in Section \ref{s:examples} show that this is not just a technical oddity or gap in our proofs since our empirical sensitivity can be seen to be quite close to our theoretical bound.  

As this is the first paper we are aware of in DP over general manifolds, there are clearly many research opportunities.  A deeper exploration over positively curved spaces would be useful given how common they are in practice (e.g., landmark shape spaces). {A class of spaces unexplored in this paper, but well worth investigating, are Hadamard or (complete) CAT(0) spaces. They are non-positively curved metric spaces that need not be manifolds, on which geodesics and Fr\'echet means can be computed, and represent a natural geometric setting for graph and tree-structured data.} One could also extend any number of privacy tools to manifolds including the Gaussian mechanism, exponential mechanism, objective perturbation, K-norm gradient mechanism, approximate DP, concentrated DP, and many others.

\begin{ack}
This work was funded in part by NSF SES-1853209, the Simons Institute at Berkeley and their 2019 program on Data Privacy to MR; and, NSF DMS-2015374, NIH R37-CA214955 and EPSRC EP/V048104/1 to KB. We thank Huiling Le for helpful discussions. 
\end{ack}

\bibliographystyle{abbrvnat}
\bibliography{DP_Manifold.bib}

\appendix

\LARGE{Supplemental to {\it Differential Privacy Over Riemannian Manifolds}}
\nopagebreak
\normalsize
\section{Simulation details}
Simulations are done in Matlab on a desktop computer with an Intel Xeon processor at 3.60GHz with 31.9 GB of RAM running Windows 10.

\subsection{Computing the Fr\'echet mean}
We use a gradient descent algorithm to compute the Fr\'echet mean of a sample $D=\{x_1,x_2,\dots,x_n\}$. We initialize the mean $\hat{\mu}_0$ at any data point, take a small step in the average direction of the gradient of energy functional $F_2: \mcM \to \mathbb R$, and iterate. If $\hat \mu_{k-1}$ is the  mean in the $(k-1)$th iterate, in normal coordinates, the gradient is given by $v_k =\frac{1}{n}\sum_{i=1}^n \exp^{-1}_{\hat{\mu}_{k-1}}(x_i)$. Then, the  estimate of the Fr\'echet mean at iterate $k$ is $\hat{\mu}_k = \exp_{\hat{\mu}_{k-1}}(t_k v_k)$ where $t_k\in(0,1]$ is the step size. The algorithm is assumed to have converged once the change in the mean across subsequent steps is no longer significant, measured using the intrinsic distance $\rho$ on $\mcM$; that is, the algorithm terminates if $\rho(\mu_k,\mu_{k-1})<\lambda$ for some pre-specified $\lambda > 0$. We choose the step size $t_k=0.5$ and $\lambda = 10^{-5}$. In addition, one could set a maximum number of iterations for situations when the mean oscillates between local optima, and we set this at 500 but note that in our settings the algorithm typically converges in fewer than 200 iterations.

\subsection{SPDM simulations}
\subsubsection{Generating random samples within $B_r(m_0)$}\label{ss:SPDMGS}
Since $\mbPk$ can be identified with the set of $k \times k$ covariance matrices, we use the Wishart distribution $W(V,df)$ to generate samples from $\mbPk$, parameterized by a scale matrix $V$ and degrees of freedom $df>0$ such that if $X \sim W(V,df)$ then $EX=df\thinspace V$ . We set $V=\frac{1}{k} I_k$, where $I_k$ is the identity matrix and $df=k$. 

Recall that under the Rao-Fisher affine-invariant metric $\mbPk$ is negatively curved and thus the radius $r$ of ball $B_r(m_0)$ in Assumption 1 within which data $D$ is assumed to lie in is unconstrained. This results in samples from $W(\frac{1}{k} I_k,k)$ being centred at $I_k$, which can be viewed as a suitable value for $m_0$. It is possible, however, that certain samples from $W(\frac{1}{k} I_k,k)$ are at a distance (in terms of $\rho$) greater than $r$ from $m_0=I_k$ since the affine-invariant metric is not used in the definition of the Wishart, but this can always be adjusted by selecting a suitable radius $r$. A sample of size $n$ is thus generated by: (i) sampling $X \sim W(\frac{1}{k} I_k,k);$ (ii) retaining $X$ if $\rho(X,I_k)<r$ or (re-)sampling $X$ until distance is smaller than $r$. For our simulations, we set $r=1.5$ and $df=2$. 

%To generate a random sample in $\mbPk$ we use the Wishart distribution, since it randomly generates elements of $\mbPk$, in the following way. The Wishart distribution is parameterized by $V$ and $df$ with Euclidean distribution expected value of $df\cdot V$. For our simulations we set  $V=\frac{1}{k}I_k$, $df=k$, $k=2$, and $r=1.5$. We thus have distribution mean $I_k$ which we set as the center of the ball in the space of $\mbPk$. Although the elements of the sample must fall within the allotted radius, with our chosen parameters there is no guarantee that the Wishart will produce such an element. We thus check the distance on the manifold from $I_k$ to each element and redraw those which fall outside our radius.

\subsubsection{Radius in ambient space of symmetric matrices}
Let $Sym_k$ be the set of $k \times k$ symmetric matrices within which $\mbPk$ resides. In order to compare sensitivity of the proposed method using geometry of $\mbPk$ to one which considers only the ambient space $Sym_k$, the radius $r_E$ of a ball in $Sym_k$, with respect to the distance induced by the Frobenius norm $\|\cdot\|_2$, that in a certain sense `corresponds' to $r$ on $\mbPk$ needs to be ascertained. 

This amounts to determining how the distance $\rho(x,y)= \|\text{Log}(x^{-1/2} y x^{-1/2}) \|_{2}$ under the affine-invariant metric on $\mbPk$ compares to $\|x-y\|_2$ when $x,y \in \mbPk$. Since $m_0=I_k$, we can choose $y=I_k$ and compare $\|\text{Log}(x)\|_2$ with  $\| x - I_k\|_2$.
%Suppose we have generated a sample $D\subset\mbPk$ which lies on a ball with center $I_k$ and radius $r_m$. (We add the $m$ subscript to accentuate that this is the radius on the manifold.) The question we aim to answer in this section is, how does $r_m$ compare to the radius of the data in the ambient space? This is vital since we are comparing privacy using and ignoring the manifold structure, thus we must have comparable examples.

%Let $Sym_k$ denote the space of symmetric $k\times k$ matrices. First note that $\mbPk\subset Sym_k$. While we can consider the ambient space of $\mbPk$ to simply be $k\times k$ matrices, we use $Sym_k$ since under the Riemannian metric it is a tangent space and it preserves some structure to our data. So, let $x,y\in\mbPk$. Then the distance between $x$ and $y$ on the manifold is given by
%\[
%d(x,y) = 
%\|  \text{Log}(x^{-1/2} y x^{-1/2}) \|_{2}
%\]
%where $\| \cdot \|$ is the Frobenius norm for matrices. The question is how does this distance compare to the Euclidean norm $\|x - y \|^2$?

%Since we are taking the ball centered at the identity matrix, this means that we are interested in comparing
%\[
%\|\text{Log}(x)\|_2 \qquad \text{and} \qquad \| x - I\|_2. 
%\]
In particular, we seek to find the smallest Euclidean ball in $Sym_k$ that contains the geodesic ball $B_r(I_k)$ in $\mbPk$, and we accordingly define the radius of the Euclidean ball $r_E$ to be
\[
r_E = \sup_{x \in \mbPk:\| \text{Log}(x)\| \leq r} \|x - I\|.
\]
%the radius of the corresponding Euclidean ball centered at $I_k$ with radius $r_E$. 
Expressing $x$ in its diagonal basis following a suitable change of coordinates leaves $\|\text{Log}(x)\|$ and $\|x- I\|$ unchanged, and hence 
\[
r_E^2 = \sup_{\lambda: \sum_i \log(\lambda_i)^2 \leq r^2} \sum_i (\lambda_i - 1)^2,
\]
where $\lambda_i>0, i=1,\ldots,k$ are the eigenvalues of $x$. 

\begin{proposition}
$r_E = e^r -1$.
\end{proposition}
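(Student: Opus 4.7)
The plan is to recast the sup in log-coordinates and then carry out a termwise power-series comparison. First I would substitute $u_i = \log \lambda_i$, so that the constraint $\sum_i (\log \lambda_i)^2 \le r^2$ becomes $\sum_i u_i^2 \le r^2$ and
\[
r_E^2 = \sup\Bigl\{\sum_{i=1}^k (e^{u_i} - 1)^2 : u \in \mathbb{R}^k,\ \sum_{i=1}^k u_i^2 \le r^2\Bigr\}.
\]
The natural candidate maximizer is $u_1 = r$, $u_2 = \cdots = u_k = 0$, corresponding to eigenvalues $(e^r,1,\ldots,1)$ and objective value $(e^r-1)^2$. I would therefore aim to prove the sharp inequality $\sum_{i=1}^k (e^{u_i}-1)^2 \le (e^{\|u\|_2}-1)^2$ for every $u \in \mathbb{R}^k$.

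Next I would expand both sides as absolutely convergent power series. Writing $(e^t-1)^2 = \sum_{n=2}^\infty c_n t^n$ with $c_n = \sum_{j=1}^{n-1} \frac{1}{j!\,(n-j)!} > 0$, and setting $R := \|u\|_2$, the desired inequality reads $\sum_{n=2}^\infty c_n \sum_i u_i^n \le \sum_{n=2}^\infty c_n R^n$. Because every $c_n$ is positive, it suffices to establish the termwise bound $\sum_i u_i^n \le R^n$ for every $n \ge 2$.

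For the termwise bound I would treat even and odd $n$ uniformly via the elementary inequality $\sum_i v_i^p \le (\sum_i v_i)^p$ valid for $v_i \ge 0$ and $p \ge 1$ (which follows from $(v_i/S)^p \le v_i/S$ when $S = \sum_j v_j$ and $v_i/S \le 1$). For even $n$, set $v_i = u_i^2$ and $p = n/2 \ge 1$; then $\sum_i u_i^n = \sum_i v_i^p \le S^p = R^n$. For odd $n$, first majorize $\sum_i u_i^n \le \sum_i |u_i|^n$ and apply the same bound with $v_i = u_i^2$ and $p = n/2 \ge 3/2$. Combining termwise and invoking monotonicity of $s \mapsto (e^s-1)^2$ on $[0,\infty)$ together with $R \le r$ yields $\sum_i (e^{u_i}-1)^2 \le (e^r - 1)^2$, with equality precisely when a single $u_i$ equals $r$ and the rest vanish. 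Hence $r_E = e^r - 1$.

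I do not expect a substantive obstacle. The one mild subtlety is the odd-$n$ case, where $u_i^n$ is not automatically dominated by $R^n$ in the signed sense; passing to $|u_i|^n$ removes the sign issue, after which the same power-mean inequality applies uniformly.
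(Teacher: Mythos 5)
Your proof is correct and follows essentially the same route as the paper: pass to log-coordinates $u_i=\log\lambda_i$, expand $(e^t-1)^2$ as a power series with positive coefficients, and reduce to the termwise bound $\sum_i u_i^n \le \bigl(\sum_i u_i^2\bigr)^{n/2}$, which is the same $\ell_p$-monotonicity fact the paper invokes. The only difference is cosmetic: you prove the $k$-variable inequality in one shot (and handle the sign issue for odd powers explicitly), whereas the paper merges coordinates two at a time and appeals to symmetry.
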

\begin{proof}
Let $u_i=\log \lambda_i, i=1,\ldots,k$. Consider the value of the objective function with the vector 
\[(u_1,u_2,u_3,\ldots,u_k)=\left(\sqrt{u_1^2+u_2^2},0,u_3,\ldots,u_k\right).\]
%$u_1 = \log(\lambda_1)$ and $u_2 = \log(\lambda_2)$, the first two eigen vectors, versus placing all of the weight in the first coordinate.  In other words, we compare $(u_1,u_2)$ against $(\sqrt{u_1^2 + u_2^2},0).$  The remaining $\lambda_3,\dots$ can take whatever value they like.  
The reason behind assuming such a structure for the vector of eigenvalues is that if we can show that the value of objective function increases by replacing $(u_1,u_2)$ by $(\sqrt{u_1^2 + u_2^2},0)$, then by symmetry the objective function will be maximized by placing all of the weight in the first coordinate and setting all remaining coordinates to $\lambda_i = 1$.  Consider a Taylor expansion of each of the terms (assume wlog that $u_i \geq 0$):
\begin{align}
    (e^{u_i} - 1)^2
    = \left(
    \sum_{n=1}^\infty \frac{u_i^n}{n!}
    \right)^2
    = \sum_{n, m} \frac{u_i^{n+m}}{n! m!}
\end{align}
versus
\begin{align}
    (e^{\sqrt{u_1^2+u_2^2}} - 1)^2
    = \left(
    \sum_{n=1}^\infty \frac{(u_1^2+u_2^2)^{n/2}}{n!}
    \right)^2
    = \sum_{n, m} \frac{(u_1^2+u_2^2)^{(n+m)/2}}{n! m!}.
\end{align}
We wish to establish that
\[
\sum_{n, m} \frac{(u_1^2+u_2^2)^{(n+m)/2}}{n! m!}
\geq 
\sum_{n, m} \frac{u_1^{n+m}+u_2^{n+m}}{n! m!}\thickspace.
\]
This will hold if 
\[
(u_1^2+u_2^2)^{(n+m)/2}
\geq u_1^{n+m}+u_2^{n+m},
\]
or equivalently
\[
u_1^2+u_2^2 \geq \left((u_1^2)^{(n+m)/2}+(u_2^2)^{(n+m)/2}\right)^{2/(n+m)}.
\]
However, since $n+m \geq 2$, this follows immediately from the triangle inequality for $\ell_p$ spaces.
\end{proof}

\subsection{Sphere simulations involving spheres}
\subsubsection{Generating samples within $B_r(m_0)$}
\label{ss:SphereGS}
We use polar coordinates to sample from a $d=2$-dimensional sphere $\mathcal S^2_1$ of radius one (thus $\kappa=1$). Let $(\theta,\phi)$ be the pair of polar and azimuthal angles, respectively, where $\theta\in[0,\pi]$ and $\phi\in[0,2\pi)$. We uniformly sample on $\theta\in[0,r]$ and $\phi\in[0,2\pi)$ with $r=\pi/8$. This results in data concentrated about the north pole ($m_0)$, with increasing concentration towards the north pole. %We note that uniform sampling on the polar coordinates does not generate uniform sample on $S^2_1$ and we make no such claim.
\subsubsection{Radius in ambient space }
Suppose we have a dataset $D$ on the unit sphere, $\mathcal S^d_1$, centered at $m_0$ with (manifold) radius $r$. As with SPDM, we need to determine a suitable ball of radius $r_E$ in Euclidean space that contains the geodesic ball of radius $r$ on $\mathcal S^d_1$. 
%We can embed the data into $\mbR^3$ but then we need to compute the radius of the corresponding Euclidean ball. For this manifold 
To determine $r_E$, we simply need to convert $r$, which corresponds to arc length, to the chord length (from $m_0$ to the boundary of the ball).
That is, since the sphere radius equals 1, $r_E=2 \sin(\frac{r}{2})$.% where $\alpha$ is the angle subtended at the origin of $\mathbb R^3$ by $m_0$ and a point on the boundary of $B_r(m_0)$. 
%We have that the angle $\alpha=\frac{r_M 2\pi}{2r_S\pi}=\frac{r_M}{r_S}$. 
With $r=\pi/8$ we obtain $r_E=2\sin(\pi/16)$. %since $\alpha=\frac{2\pi r}{2\pi}=r$.

\subsection{Sampling from the Euclidean Laplace} \label{ss:euc_lap}
We discuss how to sample from K-norm mechanism with the $\mathbb \ell_2$ norm $|\cdot|_2$ (i.e. the Euclidean Laplace) on $\mathbb R^d$.  First, wlog we can take $\bar x = 0$ and $\sigma =1$ as we can clearly generate from a standardized distribution and then translate/scale the result.  So the goal is to sample from 
\[
f(y) \propto e^{- |y|_2 }.
\]
Evidently, $f$ is a member of the elliptical family of distributions. We thus sample a vector $y=(y_1,\ldots,y_d)$ from $f$ by sampling a direction uniformly on the $(d-1)$-dimensional unit sphere $\mathcal S_1^{d-1}$ and then, independently, sampling a radius $r>0$ from an appropriate distribution. To determine this distribution, let 
 \begin{align*}
 & y_1 = r  \cos(\theta_1) \\
 & y_2 = r \sin(\theta_1) \cos(\theta_2), \\
 & \vdots \\
 & y_{d-1} = \sin(\theta_1) \sin(\theta_2) \dots \cos(\theta_{d-1}) \\
 & y_d = \sin(\theta_1) \dots \sin(\theta_{d-1}).
 \end{align*}
 Then the density $f$ assumes the form
\[
f(r,\theta_1,\dots, \theta_d)
= e^{-r} r^{d-1} \sin^{d-2}(\theta_1) \dots \sin(\theta_{d-1}).
\]
Since $f$ factors into a function of $r$ and a function of the angles, the distribution of $r$ is proportional to $r^{d-1} e^{-r}$, which is just the Gamma distribution $\Gamma(d,1)$ with parameters $\alpha = d$ and $\beta= 1$. Thus, to sample a value from $f$:
\begin{enumerate}
    \item sample a direction $U$ uniformly from $\mathcal S^{d-1}$;
    \item sample a radius $R$ from $\Gamma(d,1)$ distribution;
    \item set $Y = \bar x + R \sigma U$.
\end{enumerate}
Then $Y$ will be a draw from the $d$-dimensional Euclidean Laplace with scale $\sigma$ and center $\bar x$. In order to sample $U$ from $\mathcal S_1^{d-1}$, we use the well-known fact that if $X \sim N_d(\bm 0_d,I_d)$ then $U:=X/ |X|_2$ follows a uniform distribution on $S^{d-1}$.
\subsection{Sampling from the Laplace on the sphere $\mathcal S^{d}_1$}
To sample from the Laplace on $\mathcal S^{d}_1$ we generate a Markov chain by using a Metropolis-Hastings random walk. At each step $n$ we generate a proposal $x'$ by first randomly drawing a vector $v$ in the current tangent space, $T_{x_n}\mathcal{S}_1^d$, then move on the sphere using the exponential map and said vector, $f(x'|x_n)=\exp_{x_n}v$. 
%[\red{is this the transition density or the update step?}]
To draw $v$ we uniformly sample on a ball centered at the current step $x_n$ with radius $\sigma$ by drawing a vector from $N_3(0_3,I_3)$, scaling the resulting vector to have length $\sigma$, and projecting the vector onto the tangent space of $x_n$. This projection ensures that $\|v\|\leq\sigma$, which we take to be much smaller than the injectivity radius. Further, $v$ is not uniform in $T_{x_n}\mathcal S^{d}_1$, but since we use a Metropolis-Hastings algorithm, we only require symmetry is satisfied. One can sample vectors on the required tangent space in several manners, the proposed method is chosen for computational ease. 

We aim to accept/reject draws from $f$ to produce a Markov chain with stationary density $C^{-1}_{\eta,\sigma}\text{exp}(-\rho(\eta,x)/\sigma)$, where $\rho$ is the arc distance on the sphere. We follow a standard Metropolis-Hastings schematic.

\begin{enumerate}
\item Initialize $x_0=\eta$.
\item In the $n$th iteration, draw a vector in $T_{x_n}\mathcal S^{d}_1$, the tangent space of $x_n$, as described earlier denoted as $v$.
\item Generate a candidate $x'$ by letting $x'=\exp_{x_n}v$. 
\item Accept $x'$ and set $x_{n+1}=x'$ with probability $\exp(- \rho(\eta,x')/\sigma)/\exp(- \rho(\eta,x_n)/\sigma)$. Otherwise, reject $x'$ and generate another candidate by returning to previous step.
\item Return to step 2 until one has generated a sufficiently long chain.
\end{enumerate}
 The final sample is chosen based on a burn-in period of 10 000 steps and jump width of 100 to avoid correlated adjacent steps in the chain.

 \section{Bounding Distances on the Tangent Space}
 To complete Theorem 2 we need a bound on the distance
 \[
 \|\exp_{m}^{-1}(x) - \exp_{m}^{-1}(y)\|_m,
 \]
 which holds uniformly across all $m,x,y \in B_r(m_0)$; in particular, we seek a Lipschitz bound that holds uniformly over $B_r(m_0)$. 

\begin{lemma}
Under the assumptions of Theorem 2, for $x,y,m \in B_r(m_0)$ we have
\[
\|\exp_{m}^{-1}(x) - \exp_{m}^{-1}(y)\|_m
\leq 2r (2 - h(r, \kappa)).
\]
\end{lemma}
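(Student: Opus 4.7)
The plan is to set $v_x := \exp_m^{-1}(x)$ and $v_y := \exp_m^{-1}(y)$ and bound $\|v_x - v_y\|_m$. The triangle inequality applied inside $B_r(m_0)$, together with geodesic convexity of the closed ball under Assumption \ref{a:main}, gives $\|v_x\|_m, \|v_y\|_m \leq 2r$ and $\rho(x,y) \leq 2r$. The argument then splits on the sign of $\kappa$, matching the piecewise definition of $h(r,\kappa)$.

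For $\kappa \leq 0$ we have $h(r,\kappa) = 1$, so the target reduces to $\|v_x - v_y\|_m \leq 2r$. Here I would invoke the Cartan--Hadamard/G\"unther comparison: on a manifold with non-positive sectional curvatures, $\exp_m$ is distance non-decreasing, and hence $\|v_x - v_y\|_m \leq \rho(x,y) \leq 2r$, which closes this case without any curvature correction.

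For $\kappa > 0$, the plan is to take the unit-speed minimising geodesic $\alpha:[0,\rho(x,y)] \to B_r(m_0)$ from $x$ to $y$ (existence and containment follow from geodesic convexity under Assumption \ref{a:main}) and lift it to $T_m\mcM$ via $V(t) := \exp_m^{-1}(\alpha(t))$. Then $V(0)=v_x$, $V(\rho(x,y))=v_y$, and $\|v_x - v_y\|_m \leq \int_0^{\rho(x,y)} \|V'(t)\|_m \, dt$. The derivative $V'(t)$ is the initial velocity of a Jacobi field $J$ along the geodesic $s \mapsto \exp_m(s V(t))$ with $J(0)=0$, $J'(0)=V'(t)$, and $J(1)=\alpha'(t)$. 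Decomposing $V'(t)$ into components parallel and perpendicular to $V(t)$, using that parallel Jacobi fields evolve exactly as on the model space, and applying Rauch's first comparison with the sphere of curvature $\kappa$ to the perpendicular component, I would obtain a pointwise estimate of the form $\|V'(t)\|_m \leq C(L(t),\kappa)\|\alpha'(t)\|_{\alpha(t)}$, where $L(t):=\|V(t)\|_m \leq 2r$ and $C$ is explicit and monotone on $(0,\pi/(2\sqrt{\kappa}))$. Integrating and invoking $\|\alpha'\|\equiv 1$ together with monotonicity of $C$ in $L$ produces a closed-form bound depending only on $r$ and $\kappa$.

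The main obstacle is reducing that closed-form bound to exactly $2r(2 - h(r,\kappa))$. This algebraic simplification uses $2r\sqrt{\kappa}<\pi/2$ from Assumption \ref{a:main}, together with trigonometric identities relating the model-sphere Jacobi solutions $\sin(\cdot\sqrt{\kappa})/\sqrt{\kappa}$ and $\cos(\cdot\sqrt{\kappa})$ to the cotangent form of $h$; it is exactly where the specific cotangent appearing in $h(r,\kappa)$ surfaces, and I expect it to follow from the distance estimates in \citet{karcher1977riemannian} invoked by the lemma.
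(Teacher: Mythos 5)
Your proposal is correct and follows essentially the same route as the paper's proof: both lift the minimizing geodesic from $x$ to $y$ into $T_m\mcM$ via $\exp_m^{-1}$, bound the speed of the lifted curve pointwise by the operator norm of $\mathrm{D}\exp_m^{-1}$ using Jacobi-field comparison (trivially $1$ for $\kappa\leq 0$, and the Rauch/Karcher estimate for $\kappa>0$), and integrate against $\rho(x,y)\leq 2r$. The one step you defer --- reducing the comparison constant to exactly $2-h(r,\kappa)$ --- is likewise deferred by the paper to Karcher's estimates (A5.4 with Corollary 1.6), so this is not a gap relative to the published argument; for the record it follows from $\sqrt{\kappa}L/\sin(\sqrt{\kappa}L)\leq 2-\sqrt{\kappa}L\cot(\sqrt{\kappa}L)$ for $L\leq 2r<\tfrac{\pi}{2}\kappa^{-1/2}$, which is equivalent to $u\leq 2\tan(u/2)$.
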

\begin{proof}
We first establish that the inverse exponential map at a fixed $m \in B_r(m_0)$ is Lipschitz. The map $\exp_m:T_xM \to \mcM$ when restricted to a ball $B$ of radius $r$ around the origin is a diffeomorphism since $\text{inj} \ m < r$. The inverse $\exp_m^{-1}$ is differentiable on $\exp_m(B)$. Let $N_m=\exp_m(B) \cap  B_r(m_0)$ and consider a minimizing geodesic $\gamma:[0,1] \to \mcM$ starting at $m$ that lies entirely in $N_m$.

The derivative $\text{D} \exp_m^{-1}$ is the value of $\dot J(1)=\frac{\text{D}}{ds}\frac{d}{dt}c_m(s,t)\Big|_{s=1}$, where $J(s)=\frac{d}{dt} c_m(s,t)$ is the Jacobi field  along $s \mapsto c_m(s,t)=\exp_m\{s \exp_m^{-1}(\gamma(t))\}$, with $J(0)=0$ and $J(1)=\dot \gamma(t)$. When restricted to the (compact) closure of $B$,  the map $v \mapsto \|\text{D}\exp_m^{-1} v \|_m$ is bounded above by $C(m)<\infty$. 

Let $\Gamma:=\exp_m^{-1}(\gamma)$. Comparing lengths of $\gamma$ and $\Gamma$ we obtain
\[
L(\Gamma)\leq C(m) \int_0^1 \|\gamma'(t)\|_{\gamma(t)}\text{d}t=C(m) L(\gamma),
\]
since $\|\cdot\|_m$ is continuous. Since distances are obtained by minimising lengths of paths of curves, $\exp_m^{-1}$ is Lipshcitz with constant $C(m)$ on $N_m$. However, under the assumptions of Theorem 2, from Jacobi field estimates A5.4, when used in conjunction with Corollary 1.6, in \footnote{H. Karcher. \emph{Riemannian center of mass and mollifier smoothing}, Communications in Pure and Applied Mathematics (1977), 509-541.}, we get
%To find $C$, note that for a fixed $m$, $\nabla \frac{1}{2} d^2(m,y)=-\exp_m^{-1}(y)$ and  the derivative of the inverse exponential can thus be interpreted in two ways:
%\begin{enumerate}
 %   \item An operator $T_m M \to T_m M$ defined as $v \mapsto \nabla _v \nabla \frac{1}{2} d^2(m,y)$.
 %   \item As $\dot J(1)=\frac{\text{D}}{ds}\frac{d}{dt}c_m(s,t)\Big|_{s=1}$, where $J(s)=\frac{d}{dt} c_m(s,t)$ is the Jacobi field  along $s \mapsto c_m(s,t)=\exp_m\{s \exp_m^{-1}(\gamma(t))\}$ for a geodesic $\gamma:[0,1] \to B_{m_0}(r)$ with $J(0)=0$ and $J(1)=\dot \gamma(t)$. 
%\end{enumerate}
%Combing through two papers \footnote{H. Karcher. \emph{Riemannian center of mass and mollifier smoothing}, Communications in Pure and Applied Mathematics (1977), 509-541.}\footnote{H. Le. \emph{Locating Frechet means with applications to shape spaces}, Advances in Applied Probability (2001), 33,324-338.} that use both interpretations, we get%, skipping details, 
%the following bound on the operator norm on $\text{D}\exp_x^{-1}$ as
\[
C(m) \leq \sup_{v \in T_m M, \|v\|_m=1}\|\text{D}\exp_m^{-1}v \|_{m}\leq 
\left\{
\begin{array}{cc}
2-h(r,\kappa)     &  \text{if } \kappa>0 \\
1     & \text{if } \kappa \leq 0,
\end{array}
\right.
\]
where $h(r,\kappa)$ is as defined in Theorem 2. As a consequence,
\[
\|\exp_m^{-1}(x)-\exp_m^{-1}(y)\|_m \leq [2-h(r,\kappa)]d(x,y) \leq 2r[2-h(r,\kappa)],
\]
as desired.
%In particular, for the unit sphere with $\kappa=1$, $1<[2-h(r,1)]\leq 2$ for $0<r \leq \pi/4$. The upper bound seems to work as needed, but I need to check my calculations again. 
\end{proof}

\subsection{An Empirical Bound on the Sensitivity for $\mathcal S^{d}_1$}
The sensitivity is bound as  $ \rho(\bar x, \bar x') \leq \frac{2r(2-h(r,\kappa))}{n h(r,\kappa)}$ where $h(r,\kappa)$ is a function of the radius of the ball $B_r(m_0)$ and $\kappa$ the sectional curvature of the manifold. The correction factor in the numerator, which is only present for positively curved manifolds, is not very tight for large radius ball. From the theorem we see that this correction factor comes from $\|\exp^{-1}_{m}(x)-\exp^{-1}_{m}(y)\|\leq 2r[2-h(r,\kappa)]$, so we consider this norm in the case of the unit sphere. 

Given a ball $B_r(m_0)$ and any three points $x_1,x_2,x_3\in B_r(m_0)$, we wish to find an empirical bound on $\|\exp^{-1}_{x_1}(x_2)-\exp^{-1}_{x_1}(x_3)\|$. To do this we create a uniformly spaced grid on the boundary of $B_r(m_0)$ to produce a set $\{x_i\}$. We then fix an arbitrary point, say $x_1$, and compute $\max_{i,j}  \|\exp^{-1}_{x_1}(x_i)-\exp^{-1}_{x_1}(x_j)\|$. Because of the symmetry of the ball on the sphere, one can fix the footpoint and search over all other points.
\begin{figure}
    \centering
    \includegraphics[height=2.5in]{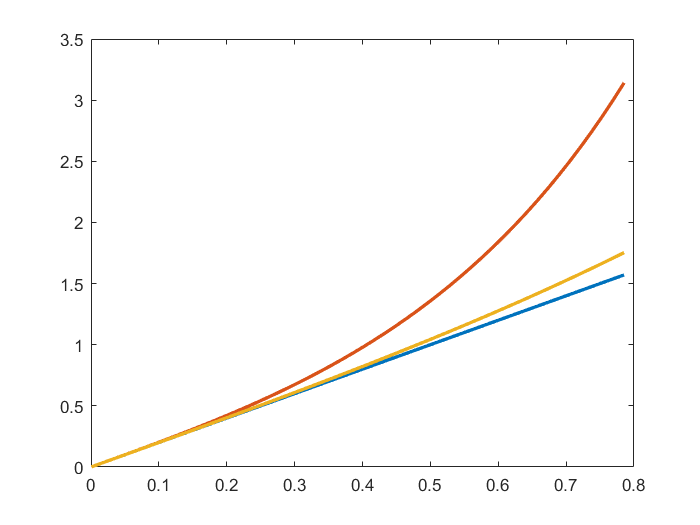}
    \caption{The x-axis represents the radius of $B_r(m_0)$. The blue line is $2r$, the red line is $2r(2-h(r,\kappa))$, and  the yellow line is $\max_{i,j} \|\exp^{-1}_{ x}(x_i)-\exp^{-1}_{x}(x_j)\|$.}
    \label{fig:Sensitivity}
\end{figure}
In Figure \ref{fig:Sensitivity} we display the radius of $B_r(m_0)$ as the x-axis, $2r$ in blue, $2r[2-h(r,\kappa)]$ in red, and $\max_{i,j} \|\exp^{-1}_{ x}(x_i)-\exp^{-1}_{x}(x_j)\|$ in yellow. We see that $2r<\max_{i,j} \|\exp^{-1}_{ x_1}(x_i)-\exp^{-1}_{x_1}(x_j)\|<2r(2-h(r,\kappa))$ however the inflation due to the the curvature is not as large as our bound. Rather than use the theoretical bound in the simulations of the sphere, we use the empirical bound.

% \end{document}

\end{document}